\DeclareMathOperator*{\argmin}{arg\,min\,}
\DeclareMathOperator{\sign}{sign}
\DeclareMathOperator{\diag}{diag}
\DeclareMathOperator{\Bias}{Bias}
\DeclareMathOperator{\Var}{Var}
\DeclareMathOperator{\MSE}{MSE}
\renewcommand{\mathbf}[1]{\ensuremath{\bm{\mathrm{#1}}}}
\theoremstyle{plain}
\newtheorem{theorem}{Theorem}[section]
\newtheorem{prop}[theorem]{Proposition}
\newtheorem{cor}[theorem]{Corollary}
\newtheorem{lemma}[theorem]{Lemma}
\begin{document}

%\thispagestyle{myheadings}
%\markright{To appear in \textit{Statistics: A Journal of Theoretical and Applied Statistics}}

\AddToShipoutPictureBG*{%
  \AtPageUpperLeft{%
    \raisebox{-\baselineskip}{%
      \makebox[5pt][l]{\quad \small This is an Author's Accepted Manuscript of an article published in \textit{Statistics: A Journal of Theoretical and Applied Statistics} 2014,}}
    \raisebox{-2\baselineskip}{
      \makebox[5pt][l]{\small available online: \url{http://www.tandfonline.com/doi/full/10.1080/02331888.2014.922563}}} }}%

\title{\Large The Influence Function of Penalized Regression Estimators}
\author{\normalsize Viktoria Öllerer  $^{a}$$^{\ast}$\thanks{$^\ast$Corresponding author. Email: viktoria.oellerer@kuleuven.be \vspace{6pt}}, Christophe Croux $^{a}$ and Andreas Alfons $^{b}$\\\normalsize\vspace{6pt} $^{a}${\em{Faculty of Economics and Business, KU Leuven, Belgium}};\\\normalsize $^{b}${\em{Erasmus School of Economics, Erasmus University Rotterdam, The Netherlands}}}
\date{\vspace{-0.5cm}}
\maketitle

\begin{abstract}
\noindent To perform regression analysis in high dimensions, lasso or ridge estimation are a common choice. However, it has been shown that these methods are not robust to outliers. Therefore, alternatives as penalized M-estimation or the sparse least trimmed squares (LTS) estimator have been proposed. The robustness of these regression methods can be measured with the influence function. It quantifies the effect of infinitesimal perturbations in the data. Furthermore it can be used to compute the asymptotic variance and the mean squared error. In this paper we compute the influence function, the asymptotic variance and the mean squared error for penalized M-estimators and the sparse LTS estimator. The asymptotic biasedness of the estimators make the calculations nonstandard. We show that only M-estimators with a loss function with a bounded derivative are robust against regression outliers. In particular, the lasso has an unbounded influence function.\\
\smallskip 

\noindent \textbf{Keywords:} Influence function; Lasso; Least Trimmed Squares; Penalized M-regression; Sparseness\\
\smallskip

\noindent \textit{AMS Subject Classification: } 62J20; 62J07
\end{abstract}

\section{Introduction}
\label{sec:Intro}

Consider the usual regression situation. We have data $(X, \mathbf{y})$, where $X\in\mathbb{R}^{n\times p}$ is the predictor matrix and $\mathbf{y}\in\mathbb{R}^n$ the response vector. A linear model is commonly fit using least squares  regression. It is well known that the least squares estimator suffers from large variance in presence of high multicollinearity among the predictors. To overcome these problems, ridge \citep[][]{Hoerl} and lasso estimation \citep[][]{Tibshirani} add a penalty term to the objective function of least squares regression
\begin{align}
\hat{\beta}_{LASSO}&=\argmin_{\mathbf{\beta}\in\mathbb{R}^p} \frac{1}{n}\sum_{i=1}^n (y_i-\mathbf{x}_i'\mathbf{\beta})^2 + 2\lambda\sum_{j=1}^p |\beta_j| \label{eq:lasso_data}\\
\hat{\beta}_{RIDGE}&=\argmin_{\mathbf{\beta}\in\mathbb{R}^p} \frac{1}{n}\sum_{i=1}^n (y_i-\mathbf{x}_i'\mathbf{\beta})^2 + 2\lambda\sum_{j=1}^p \beta_j^2.
\end{align}
In contrast to the ridge estimator that only shrinks the coefficients of the least squares estimate $\mathbf{\hat{\beta}}_{LS}$, the lasso estimator also sets many of the coefficients to zero. This increases interpretability, especially in high-dimensional models. The main drawback of the lasso is that it is not robust to outliers. As \cite{Alfons} have shown, the breakdown point of the lasso is 1/n. This means that only one single outlier can make the estimate completely unreliable. 

Hence, robust alternatives have been proposed. The least absolute deviation (LAD) estimator is well suited for heavy-tailed error distributions, but does not perform any variable selection. To simultaneously perform robust parameter estimation and variable selection, \cite{Wang2} combined LAD regression with lasso regression to LAD-lasso regression. However, this method has a finite sample breakdown point of $1/n$ \citep[][]{Alfons}, and is thus not robust. Therefore \cite{Arslan} provided a weighted version of the LAD-lasso that is made resistant to outliers by downweighting leverage points.

A popular robust estimator is the least trimmed squares (LTS) estimator \citep[][]{RousseeuwLeroy}. Although its simple definition and fast computation make it interesting for practical application, it cannot be computed for high-dimensional data ($p>n$). Combining the lasso estimator with the LTS estimator, \cite{Alfons} developed the sparse LTS-estimator 
\begin{align}
\mathbf{\hat{\beta}}_{spLTS} = \argmin_{\mathbf{\beta}\in\mathbb{R}^p} \frac{1}{h}\sum_{i=1}^h r_{(i)}^2(\mathbf{\beta}) + \lambda \sum_{j=1}^p |\beta_j|,
\label{eq:spLTS_sample}
\end{align}
where $r_i^2(\mathbf{\beta}) = (y_i - \mathbf{x}'_i\mathbf{\beta})^2$ denotes the squared residuals and $r_{(1)}^2(\mathbf{\beta})\leq\ldots\leq r_{(n)}^2(\mathbf{\beta})$ their order statistics. Here $\lambda\geq 0$ is a penalty parameter and $h\leq n$ the size of the subsample that is considered to consist of non-outlying observations. This estimator can be applied to high-dimensional data with good prediction performance and high robustness. It also has a high breakdown point \citep[][]{Alfons}.

All estimators mentioned until now, except the LTS and the sparse LTS-estimator, are a special case of a more general estimator, the penalized M-estimator \citep[][]{Li}
\begin{align}
\label{eq:betaM_data}
\mathbf{\hat{\beta}}_M=\argmin_{\mathbf{\beta}\in\mathbb{R}^p} \frac{1}{n}\sum_{i=1}^n \rho(y_i-\mathbf{x}_i'\mathbf{\beta}) + 2\lambda\sum_{j=1}^p J(\beta_j),
\end{align}
with loss function $\rho:\mathbb{R}\rightarrow\mathbb{R}$ and penalty function $J:\mathbb{R}\rightarrow\mathbb{R}$. While lasso and ridge have a quadratic loss function $\rho(z)=z^2$, LAD and LAD-lasso use the absolute value loss $\rho(z)=|z|$. The penalty of ridge is quadratic $J(z)=z^2$, whereas lasso and LAD-lasso use an $L_1$-penalty $J(z)=|z|$, and the `penalty' of least squares and LAD can be seen as the constant function $J(z)=0$. In the next sections we will see how the choice of the loss function affects the robustness of the estimator. In Equation (\ref{eq:betaM_data}), we implicitly assume that scale of the error term is fixed and known, in order to keep the calculations feasible. In practice, this implies that the argument of the $\rho$-function needs to be scaled by a preliminary scale estimate. Note that this assumption does not affect the lasso or ridge estimator.

The rest of the paper is organized as follows. In Section \ref{sec:Funct}, we define the penalized M-estimator at a functional level. In Section \ref{sec:Bias}, we study its bias for different penalties and loss functions. We also give an explicit solution for sparse LTS for simple regression. In Section \ref{sec:IF} we derive the influence function of the penalized M-estimator. Section \ref{sec:IF_lasso} is devoted to the lasso. We give its influence function and describe the lasso as a limit case of penalized M-estimators with a differentiable penalty function. For sparse LTS we give the corresponding influence function in Section \ref{sec:IF_spLTS}. In Section \ref{sec:Plot_IF} we compare the plots of influence functions varying loss functions and penalties. A comparison at sample level is provided in Section \ref{sec:NumExp}. Using the results of Sections \ref{sec:IF} - \ref{sec:IF_spLTS}, Section \ref{sec:ASVMSE} compares sparse LTS and different penalized M-estimators by looking at asymptotic variance and mean squared error. Section \ref{sec:Conc} concludes. The appendix contains all proofs.

%% FUNCTIONALS

\section{Functionals}
\label{sec:Funct}

Throughout the paper we work with the typical regression model 
\begin{align}
y=\mathbf{x}'\mathbf{\beta}_0+e
\label{eq:ModelF}
\end{align}
with centered and symmetrically distributed error term $e$. The number of predictor variables is $p$ and the variance of the error term $e$ is denoted by $\sigma^2$. We assume independence of the regressor $\mathbf{x}$ and the error term $e$ and denote the joint model distribution of $\mathbf{x}$ and $y$ by $H_0$. Whenever we do not make any assumptions on the joint distribution of $\mathbf{x}$ and $y$, we denote it by $H$.

The estimators in Section \ref{sec:Intro} are all defined at the sample level. To derive their influence function, we first need to introduce their equivalents at the population level. For the penalized M-estimator (\ref{eq:betaM_data}), the corresponding definition at the population level, with $(\mathbf{x}, y)\sim H$, is
\begin{align}
\mathbf{\beta}_M(H)=\argmin_{\mathbf{\beta}\in\mathbb{R}^p} \mathbb{E}_H\big[\rho(y-\mathbf{x}'\mathbf{\beta}) \big]+ 2\lambda\sum_{j=1}^p J(\beta_j)
\label{eq:betaM}
\end{align}
An example of a penalized M-estimator is the ridge functional, for which $\rho(z) = J(z) = z^2$. Also the lasso functional 
\begin{align}
\label{eq:lassoMulti}
\mathbf{\beta}_{LASSO}(H)=\argmin_{\mathbf{\beta}\in\mathbb{R}^p} \left(\mathbb{E}_H[(y-\mathbf{x}'\mathbf{\beta})^2]+2\lambda\sum_{i=1}^p|\beta_i|\right)
\end{align}
can be seen as a special case of the penalized M-estimator. However, its penalty is not differentiable, which will cause problems in the computation of the influence function. 

To create more robust functionals, different loss functions than the classical quadratic loss function $\rho(z) = z^2$ can be considered. Popular choices are the Huber function
\begin{align}
\label{eq:Huber}
\rho_H(z) = \begin{cases}
z^2 &\text{ if } |z|\leq k_H,\\
2k_H|z|-k_H^2 &\text{ if } |z|>k_H
\end{cases}
\end{align}
and Tukey's biweight function
\begin{align}
\label{eq:biweight}
\rho_{BI}(z) = \begin{cases}
1-(1-(\frac{z}{k_{BI}})^2)^3 &\text{ if } |z|\leq k_{BI},\\
1 &\text{ if } |z|>k_{BI}.
\end{cases}
\end{align}
The Huber loss function $\rho_H$ is a continuous, differentiable function that is quadratic in a central region $[-k_H,k_H]$ and increases only linearly outside of this interval (compare Figure~\ref{fig:biweight}). The function value of extreme residuals is therefore lower than with a quadratic loss function and, as a consequence, those observations have less influence on the estimate. Due to the quadratic part in the central region, the Huber loss function is still differentiable at zero in contrast to an absolute value loss. The main advantage of the biweight function $\rho_{BI}$ (sometimes also called `bisquared' function) is that it is a smooth function that trims large residuals, while small residuals receive a function value that is similar as with a quadratic loss (compare Figure~\ref{fig:biweight}). The choice of the tuning constants $k_{BI}$ and $k_H$ determines the breakdown point and efficiency of the functionals. We use $k_{BI}=4.685$ and $k_H=1.345$, which gives 95\% of efficiency for a standard normal error distribution in the unpenalized case. To justify the choice of $k$ also for distributions with a scale different from $1$, the tuning parameter has to be adjusted to $k\hat{\sigma}$.

\begin{figure}
\begin{center}
	\includegraphics[width = \textwidth]{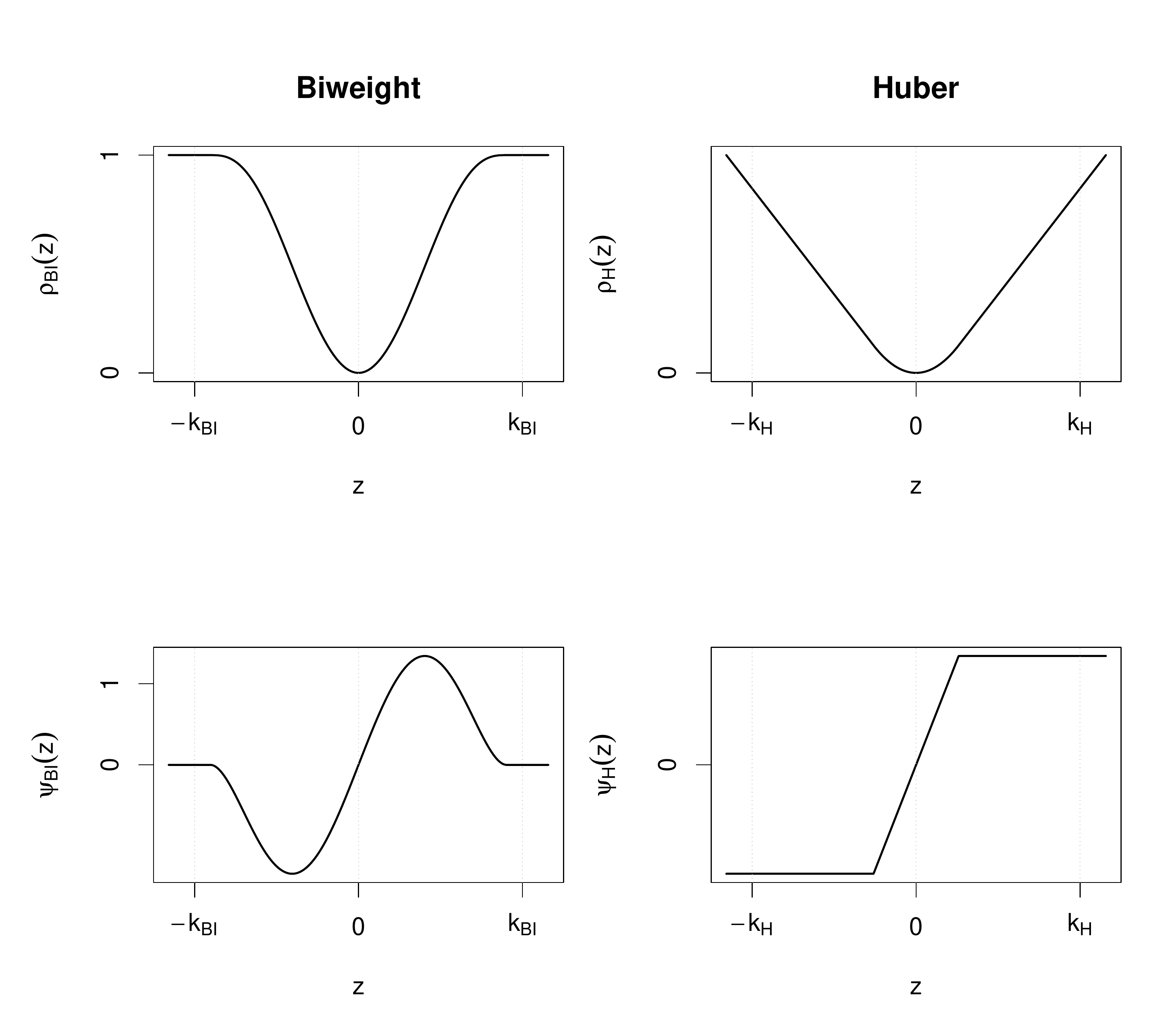}
\caption{Biweight and Huber loss function $\rho$ and their first derivatives $\psi$.}
\label{fig:biweight}
\end{center}
\end{figure}	

Apart from the $L_1$- and $L_2$-penalty used in lasso an ridge estimation, respectively, also other penalty functions can be considered. Another popular choice is the smoothly clipped absolute deviation (SCAD) penalty \citep[][]{Fan} (see Figure~\ref{fig:SCAD_2})
\begin{align}
\label{eq:SCAD}
J_{SCAD}(\beta) = \begin{cases}
|\beta| & \text{ if } |\beta|\leq\lambda,\\
-\frac{(|\beta|-a\lambda)^2}{2(a-1)\lambda}+\lambda\frac{a+1}{2} & \text{ if } \lambda<|\beta|\leq a\lambda,\\
\lambda\frac{a+1}{2} & \text{ if } |\beta|>a\lambda.
\end{cases}
\end{align}
While the SCAD functional, exactly as the lasso, shrinks (with respect to $\lambda$) small parameters to zero, large values are not shrunk at all, exactly as in least squares regression.

\begin{figure}
\begin{center}
	\includegraphics[width = 0.7\textwidth]{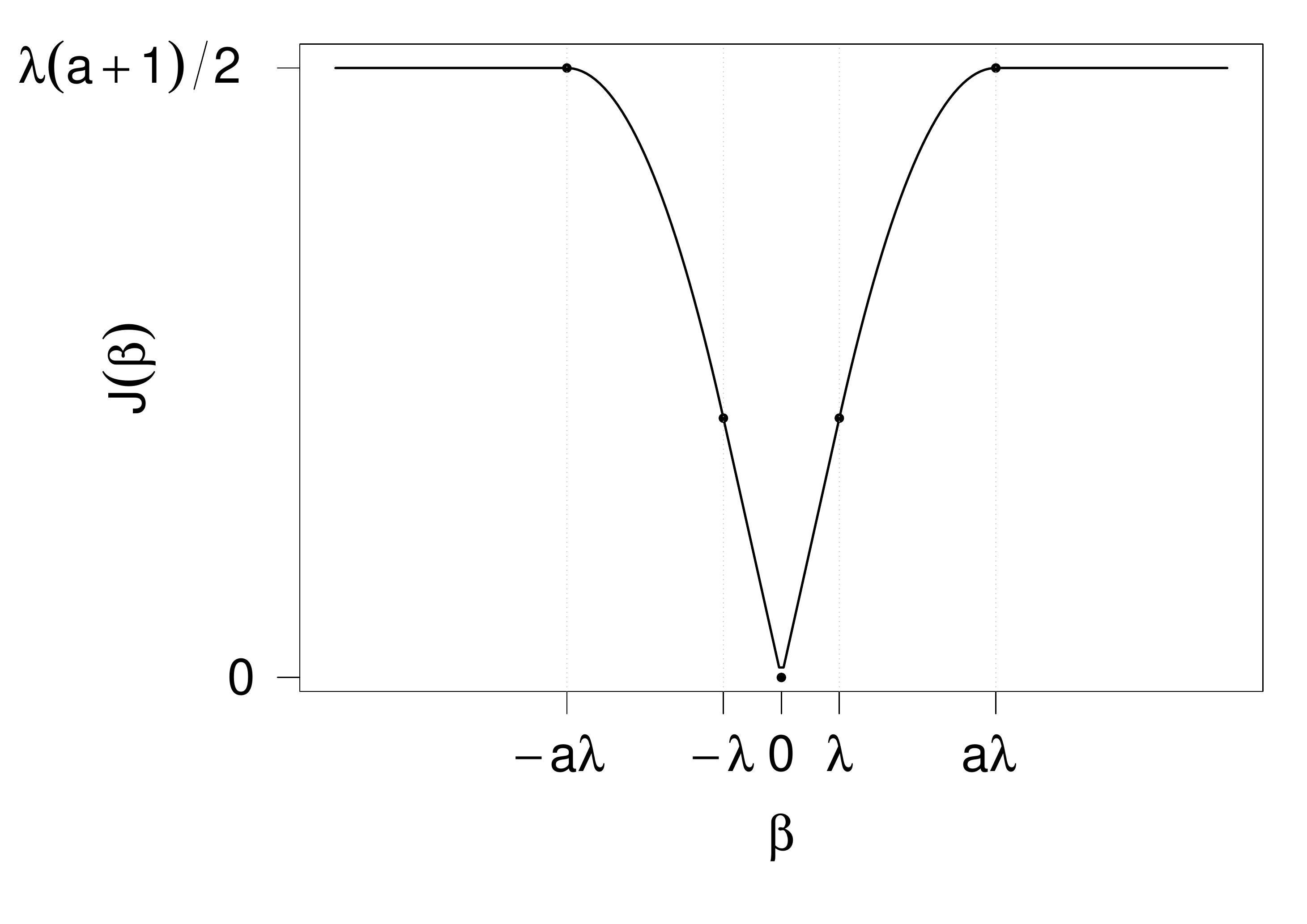}
\caption{The smoothly clipped absolute deviation (SCAD) penalty function}
\label{fig:SCAD_2}
\end{center}
\end{figure}

The definition of the sparse LTS estimator at a population level is
\begin{align}
\mathbf{\beta}_{spLTS}(H) = \argmin_{\mathbf{\beta}\in\mathbb{R}^p} \mathbb{E}_H[(y-\mathbf{x}'\mathbf{\beta})^2 I_{[|y-\mathbf{x}'\mathbf{\beta}|\leq q_{\mathbf{\beta}}]}] + \alpha \lambda \sum_{j=1}^p|\beta_j|,
\label{eq:spLTS}
\end{align}
with $q_{\mathbf{\beta}}$ the $\alpha$-quantile of $|y-\mathbf{x}'\mathbf{\beta}|$. As recommended in \cite{Alfons}, we take $\alpha = 0.75$.

%% Bias

\section{Bias}
\label{sec:Bias}

The penalized M-functional $\mathbf{\beta}_M$ has a bias 
\begin{align}
\Bias(\mathbf{\beta}_M,H_0)&=\mathbf{\beta}_M(H_0)-\mathbf{\beta}_0
\label{eq:bias}
\end{align}
at the model distribution $H_0$. The bias is due to the penalization and is also present for penalized least squares functionals. Note that there is no bias for non-penalized M-functionals. The difficulty of Equation (\ref{eq:bias}) lies in the computation of the functional $\mathbf{\beta}_M(H_0)$. For the lasso functional, there exists an explicit solution only for simple regression (i.e. $p=1$)
\begin{align}
\label{eq:lassoFunct}
\beta_{LASSO}(H) = \sign(\beta_{LS}(H))\bigg(\left|\beta_{LS}(H)\right|-\frac{\lambda}{\mathbb{E}_H[x^2]}\bigg)_+.
\end{align}
Here $\beta_{LS}(H)=\mathbb{E}_H[xy]/\mathbb{E}_H[x^2]$ denotes the least squares functional and $(z)_+=\max(0,z)$, the positive part function. For completeness, we give a proof of Equation (\ref{eq:lassoFunct}) in the appendix. For multiple regression the lasso functional at the model distribution $H_0$ can be computed using the idea of the coordinate descent algorithm (see Section \ref{sec:IF_lasso}), with the model parameter $\mathbf{\beta}_0$ as a starting value. Similarly, also for the SCAD functional there exists an explicit solution only for simple regression
\begin{align}
\beta_{SCAD}(H) = \begin{cases}
(|\beta_{LS}(H)|-\frac{\lambda}{\mathbb{E}_{H_0}[x^2]})_+ \sign(\beta_{LS}(H)) & \text{ if } |\beta_{LS}(H)|\leq \lambda+\frac{\lambda}{\mathbb{E}_{H_0}[x^2]},\\
\frac{(a-1)\mathbb{E}_{H_0}[x^2]\beta_{LS}(H) - a\lambda\sign(\beta_{LS}(H))}{(a-1)\mathbb{E}_{H_0}[x^2]-1} & \text{ if } \lambda + \frac{\lambda}{\mathbb{E}_{H_0}[x^2]}<|\beta_{LS}(H)| \leq a\lambda,\\
\beta_{LS}(H) & \text{ if } |\beta_{LS}(H)| > a\lambda.
\end{cases}
\label{eq:betascad}
\end{align}
This can be proved using the same ideas as in the computation of the solution for the lasso functional in simple regression (see Proof of Equation (\ref{eq:lassoFunct}) in the appendix). Here the additional assumption $\mathbb{E}_H[x^2]>1/(a-1)$ is needed. As can be seen from Equation (\ref{eq:betascad}), the SCAD functional is unbiased at the model $H_0$ for large values of the parameter $\mathbf{\beta}_0$.

To compute the value of a penalized M-functional that does not use a quadratic loss function, the iteratively reweighted least squares (IRLS) algorithm \citep[][]{Osborne} can be used to find a solution. Equation (\ref{eq:betaM}) can be rewritten as 
\begin{align*}
\mathbf{\beta}_M(H)=\argmin_{\mathbf{\beta}\in\mathbb{R}^p}\mathbb{E}_H[w(\mathbf{\beta})(y-\mathbf{x}'\mathbf{\beta})^2] + 2\lambda\sum_{j=1}^p J(\beta_j)
\end{align*}
with weights $w(\mathbf{\beta})=\rho(y-\mathbf{x}'\mathbf{\beta})/(y-\mathbf{x}'\mathbf{\beta})^2$. If a value of $\mathbf{\beta}$ is available, the weights can be computed. If the weights are taken as fixed, $\mathbf{\beta}_M$ can be computed using a weighted lasso (if an $L_1$-penalty was used), weighted SCAD (for a SCAD-penalty) or a weighted ridge (if an $L_2$-penalty is used). Weighted lasso and weighted SCAD can be computed using a coordinate descent algorithm, for the weighted ridge an explicit solution exists. Computing weights and $\mathbf{\beta}_M$ iteratively, convergence to a local solution of the objective function will be reached. As a good starting value we take the true value $\mathbf{\beta}_0$. The expected values that are needed for the weighted lasso/SCAD/ridge are calculated by Monte Carlo approximation.

For the sparse LTS functional, we can find an explicit solution for simple regression with normal predictor and error term.
\begin{lemma}
\label{lemma:spLTS_uni}
Let $y=x\beta_0 + e$ be a simple regression model as in (\ref{eq:ModelF}). Let $H_0$ be the joint distribution of $x$ and $y$, with $x$ and $e$ normally distributed. Then the explicit solution of the sparse LTS functional (\ref{eq:spLTS}) is 
\begin{align}
\beta_{spLTS}(H_0) = \sign(\beta_0)\left(|\beta_0| - \frac{\alpha\lambda}{2 c_1 \mathbb{E}_{H_0}[x^2]}\right)_+
\label{eq:spLTS_uni}
\end{align}
with $c_1 = \alpha - 2q_\alpha \phi(q_\alpha)$, $q_\alpha$ the $\frac{\alpha+1}{2}$-quantile of the standard normal distribution and $\phi$ its density.
\end{lemma}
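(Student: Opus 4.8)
The plan is to exploit Gaussianity in order to collapse the trimmed objective in (\ref{eq:spLTS}) into an ordinary $L_1$-penalized least squares problem in the single parameter $\beta$, and then to read off the minimiser by soft-thresholding. First I would analyse the residual at a fixed candidate $\beta$: writing $r_\beta := y - x\beta = x(\beta_0-\beta) + e$, independence and joint normality of $x$ and $e$ (together with the centering of $x$ and $e$) give $r_\beta \sim N(0,\tau_\beta^2)$ with $\tau_\beta^2 := (\beta_0-\beta)^2\,\mathbb{E}_{H_0}[x^2] + \sigma^2$. Hence $|r_\beta|/\tau_\beta$ is half-normal, so its $\alpha$-quantile is $q_\alpha$, the $\tfrac{\alpha+1}{2}$-quantile of the standard normal (since $P(|Z|\le q_\alpha) = 2\Phi(q_\alpha) - 1 = \alpha$, with $\Phi$ the standard normal cdf), and therefore the trimming quantile is $q_\beta = \tau_\beta\, q_\alpha$.

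Next I would evaluate the truncated second moment. Substituting $r_\beta = \tau_\beta Z$ with $Z\sim N(0,1)$ turns the first term of (\ref{eq:spLTS}) into $\tau_\beta^2\int_{-q_\alpha}^{q_\alpha} z^2\phi(z)\,dz$, and an integration by parts based on the identity $z\phi(z) = -\phi'(z)$ gives $\int_{-q_\alpha}^{q_\alpha} z^2\phi(z)\,dz = -2q_\alpha\phi(q_\alpha) + \big(2\Phi(q_\alpha)-1\big) = \alpha - 2q_\alpha\phi(q_\alpha) = c_1$. Consequently the objective in (\ref{eq:spLTS}) equals $c_1\,\mathbb{E}_{H_0}[x^2]\,(\beta-\beta_0)^2 + c_1\sigma^2 + \alpha\lambda|\beta|$, that is, up to the additive constant $c_1\sigma^2$, a weighted lasso objective in $\beta$ with ``design variance'' $A := c_1\,\mathbb{E}_{H_0}[x^2]$.

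Finally, after checking that $c_1>0$ (so that $A>0$ and the objective is strictly convex), I would minimise $A(\beta-\beta_0)^2 + \alpha\lambda|\beta|$ by the usual subdifferential argument, splitting into the three cases $\beta>0$, $\beta<0$, $\beta=0$; this yields the unique minimiser $\sign(\beta_0)\big(|\beta_0| - \tfrac{\alpha\lambda}{2A}\big)_+$, and inserting $A = c_1\mathbb{E}_{H_0}[x^2]$ gives exactly (\ref{eq:spLTS_uni}).

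The only genuinely non-routine point is recognising that joint normality of $x$ and $e$ is precisely what makes the trimming enter solely through the constant factor $c_1$ multiplying the \emph{full} residual variance $\tau_\beta^2$ — without it, the $\beta$-dependence of $q_\beta$ would not decouple so cleanly. I expect the remaining care to go into verifying the positivity of $c_1$ (to keep the minimisation problem well posed) and into handling the kink at $\beta=0$, together with the boundary case $\beta_0=0$, in the subgradient analysis.
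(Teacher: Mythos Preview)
Your proposal is correct and follows essentially the same route as the paper: reduce the residual to a centred normal with variance $\tau_\beta^2=(\beta_0-\beta)^2\mathbb{E}_{H_0}[x^2]+\sigma^2$, evaluate the truncated second moment to obtain $c_1\tau_\beta^2$, and then minimise $c_1\tau_\beta^2+\alpha\lambda|\beta|$ by a sign-split/subgradient argument. Your version is in fact slightly more explicit than the paper's (you spell out the integration by parts for $c_1$ and note the positivity of $c_1$ needed for convexity), but there is no substantive difference in strategy.
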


Lemma \ref{lemma:spLTS_uni} gives an explicit solution of the sparse LTS functional for only normally distributed errors and predictors, which is a strong limitation. In the general case, with $x\sim F$, $e\sim G$, and $x$ and $e$ independent, the residual $y-x\beta=x(\beta_0-\beta)+e$ follows a distribution $D_\beta(z)=F(z/(\beta_0-\beta))\ast G(z)$ for $\beta_0>\beta$, where $\ast$ denotes the convolution. Without an explicit expression for $D_\beta$, it will be hard to obtain an explicit solution for the sparse LTS functional. On the other hand, if $D_\beta$ is explicitly known, the proof of Lemma $\ref{lemma:spLTS_uni}$ can be followed and an explicit solution for the sparse LTS-functional can be found.
A case where explicit results are feasible is for $x$ and $e$ both Cauchy distributed, since the convolution of Cauchy distributed variables remains Cauchy. Results for this case are available from the first author upon request.

To study the bias of the various functionals of Section \ref{sec:Funct}, we take $p=1$ and assume $x$ and $e$ as standard normally distributed. We use $\lambda=0.1$ for all functionals. Figure~\ref{fig:Bias} displays the bias as a function of $\beta_0$. Of all functionals used only least squares has a zero bias. The $L_1$-penalized functionals have a constant bias for values of $\beta_0$ that are not shrunken to zero. For smaller values of $\beta_0$ the bias increases monotonously in absolute value. Please note that the penalty parameter $\lambda$ plays a different role for different estimators, as the same $\lambda$ yields different amounts of shrinkage for different estimators. For this reason, Figure~\ref{fig:Bias} illustrates only the general shape of the bias as a function of $\beta_0$.

\begin{figure}
\begin{center}
	\includegraphics[width = 0.7\textwidth]{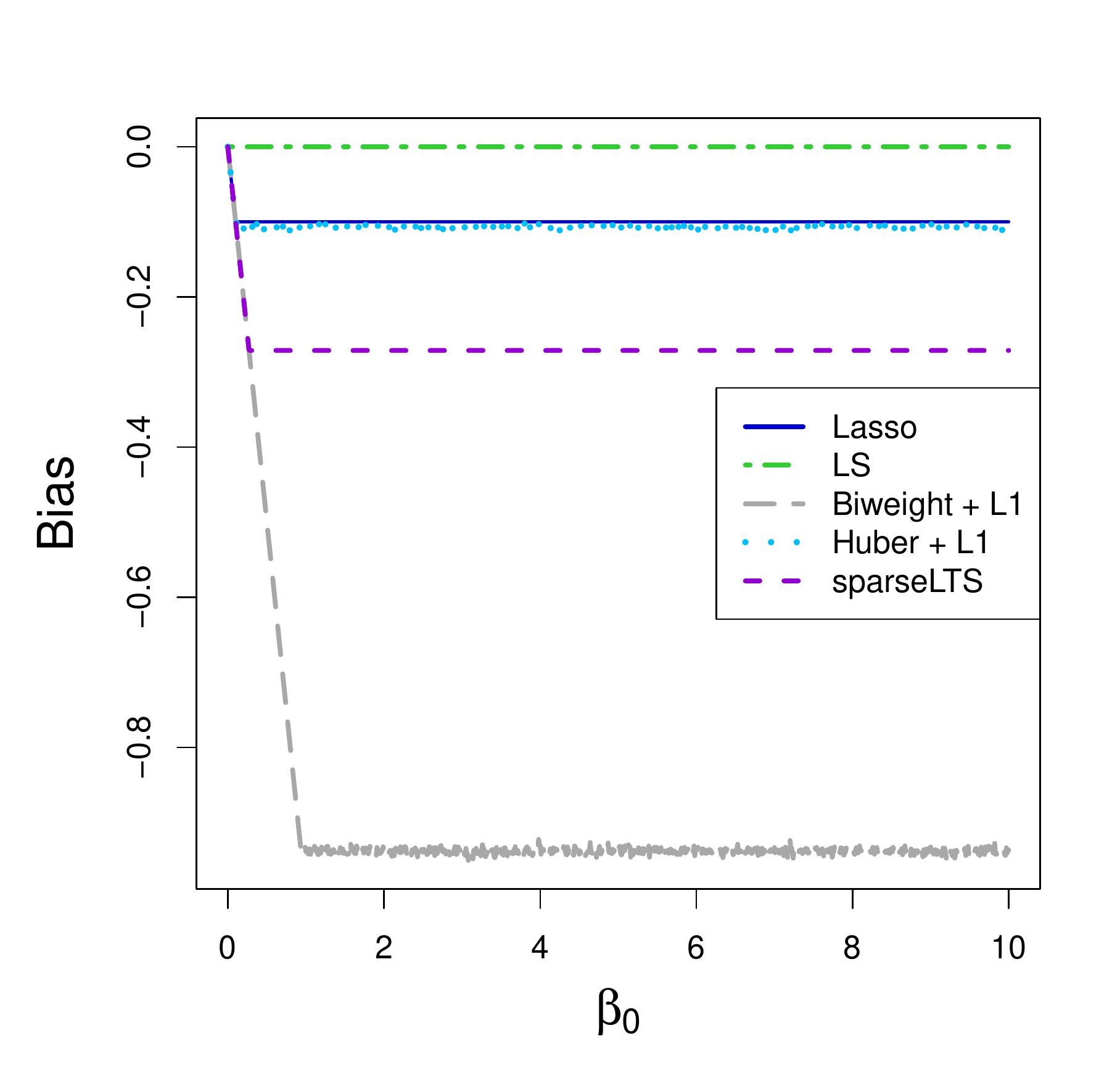}
\caption{Bias of various functionals for different values of $\beta_0$ ($\lambda = 0.1$ fixed). Note that the small fluctuations are due to Monte Carlo simulations in the computation of the functional.}
\label{fig:Bias}
\end{center}
\end{figure}

%% THE INFLUENCE FUNCTION

\section{The Influence Function}
\label{sec:IF}

The robustness of a functional $\mathbf{\beta}$ can be measured via the influence function
\begin{align*}
IF((\mathbf{x}_0,y_0),\mathbf{\beta},H) = \frac{\partial}{\partial\epsilon} \bigg[\mathbf{\beta}\big((1-\epsilon) H + \epsilon \delta_{(\mathbf{x}_0,y_0)}\big)\bigg]\bigg|_{\epsilon=0}.
\end{align*}
It describes the effect of infinitesimal, pointwise contamination in $(\mathbf{x}_0,y_0)$ on the functional $\mathbf{\beta}$. Here $H$ denotes any distribution and $\delta_{\mathbf{z}}$ the point mass distribution at $\mathbf{z}$. To compute the influence function of the penalized M-functional (\ref{eq:betaM}), smoothness conditions for functions $\rho(\cdot)$ and $J(\cdot)$ have to be assumed.

\begin{prop}
\label{prop:gen}
Let $y=\mathbf{x}'\beta_0+e$ be a regression model as defined in (\ref{eq:ModelF}). Furthermore, let $\rho,J:\mathbb{R}\rightarrow\mathbb{R}$ be twice differentiable functions and denote the derivative of $\rho$ by  $\psi := \rho'$. Then the influence function of the penalized M-functional $\mathbf{\beta}_M$ for $\lambda\geq0$ is given by
\begin{align}
IF((\mathbf{x}_0,&y_0),\mathbf{\beta}_M, H_0)=\notag\\
=&\big(\mathbb{E}_{H_0}[\psi'(y-\mathbf{x'\,\beta}_{M}(H_0))\mathbf{x}\mathbf{x}']+2\lambda \diag(J''(\mathbf{\beta}_M(H_0)))\big)^{-1}\cdot \label{eq:IFgen} \\
& \cdot \big(\psi(y_0-\mathbf{x}'_0\mathbf{\beta}_M(H_0))\mathbf{x}_0 - \mathbb{E}_{H_0}[\psi(y-\mathbf{x'\,\beta}_{M}(H_0))\mathbf{x}]\big).\notag
\end{align}
\end{prop}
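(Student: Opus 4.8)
The plan is to obtain the influence function from the first-order condition characterising $\mathbf{\beta}_M(H)$ and then to differentiate that condition along the contamination path $\epsilon\mapsto H_\epsilon := (1-\epsilon)H_0 + \epsilon\,\delta_{(\mathbf{x}_0,y_0)}$. Since $\rho$ and $J$ are twice differentiable, the minimiser in (\ref{eq:betaM}), whenever it lies in the interior, satisfies the stationarity equation obtained by setting the gradient of the objective to zero,
\begin{align*}
-\,\mathbb{E}_H\big[\psi(y-\mathbf{x}'\mathbf{\beta}_M(H))\,\mathbf{x}\big] + 2\lambda\,\mathbf{J}'(\mathbf{\beta}_M(H)) = \mathbf{0},
\end{align*}
where $\psi=\rho'$ and $\mathbf{J}'(\mathbf{\beta}):=\big(J'(\beta_1),\dots,J'(\beta_p)\big)'$. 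The key structural observation is that this is an $M$-type estimating equation whose first part is an expectation under $H$ while the penalty term $2\lambda\,\mathbf{J}'(\cdot)$ does not depend on $H$; hence the penalty will contribute only to the Jacobian (the matrix in front of (\ref{eq:IFgen})) and not to the ``score'' part.

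Substituting $H_\epsilon$ and writing $\mathbf{\beta}_\epsilon:=\mathbf{\beta}_M(H_\epsilon)$, linearity of the expectation in the mixing gives
\begin{align*}
-(1-\epsilon)\,\mathbb{E}_{H_0}\big[\psi(y-\mathbf{x}'\mathbf{\beta}_\epsilon)\,\mathbf{x}\big] - \epsilon\,\psi(y_0-\mathbf{x}_0'\mathbf{\beta}_\epsilon)\,\mathbf{x}_0 + 2\lambda\,\mathbf{J}'(\mathbf{\beta}_\epsilon) = \mathbf{0}.
\end{align*}
Differentiating with respect to $\epsilon$ and evaluating at $\epsilon=0$, where $\frac{\partial}{\partial\epsilon}\mathbf{\beta}_\epsilon\big|_{\epsilon=0}=IF((\mathbf{x}_0,y_0),\mathbf{\beta}_M,H_0)$, the chain rule produces four contributions: differentiating the factor $(1-\epsilon)$ yields $\mathbb{E}_{H_0}[\psi(y-\mathbf{x}'\mathbf{\beta}_M(H_0))\mathbf{x}]$; differentiating inside that expectation yields $\mathbb{E}_{H_0}[\psi'(y-\mathbf{x}'\mathbf{\beta}_M(H_0))\,\mathbf{x}\mathbf{x}']\,IF$ (using $\frac{\partial}{\partial\epsilon}(y-\mathbf{x}'\mathbf{\beta}_\epsilon)=-\mathbf{x}'IF$ and $\mathbf{x}(\mathbf{x}'IF)=(\mathbf{x}\mathbf{x}')IF$); the point-mass term yields $-\psi(y_0-\mathbf{x}_0'\mathbf{\beta}_M(H_0))\mathbf{x}_0$; and the penalty yields $2\lambda\,\diag(J''(\mathbf{\beta}_M(H_0)))\,IF$. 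Collecting the terms proportional to $IF$ gives the linear system
\begin{align*}
&\big(\mathbb{E}_{H_0}[\psi'(y-\mathbf{x}'\mathbf{\beta}_M(H_0))\mathbf{x}\mathbf{x}'] + 2\lambda\,\diag(J''(\mathbf{\beta}_M(H_0)))\big)\,IF\\
&\qquad = \psi(y_0-\mathbf{x}_0'\mathbf{\beta}_M(H_0))\mathbf{x}_0 - \mathbb{E}_{H_0}[\psi(y-\mathbf{x}'\mathbf{\beta}_M(H_0))\mathbf{x}],
\end{align*}
and inverting the matrix delivers (\ref{eq:IFgen}); taking $\lambda=0$ recovers the classical unpenalised case.

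The non-standard aspect — emphasised in the abstract — is that $\mathbf{\beta}_M(H_0)\neq\mathbf{\beta}_0$ because of the penalty bias, so the residual $y-\mathbf{x}'\mathbf{\beta}_M(H_0)$ under $H_0$ is not distributed as the symmetric error $e$, and the term $\mathbb{E}_{H_0}[\psi(y-\mathbf{x}'\mathbf{\beta}_M(H_0))\mathbf{x}]$ does not vanish; it must be carried along and accounts for the second summand in the bracket on the right of (\ref{eq:IFgen}). The usual shortcut of centring at the true parameter and invoking symmetry of $e$ to annihilate the odd cross terms is therefore unavailable, which is the main conceptual obstacle. The remaining work is of a technical, regularity nature: one must justify that $\epsilon\mapsto\mathbf{\beta}_M(H_\epsilon)$ is differentiable at $0$ with the stationarity equation holding along the path (so the implicit differentiation is legitimate), that the derivative may be taken under the expectation sign (dominated convergence, using integrability of $\psi,\psi'$ against the moments of $\mathbf{x}$), and that the Hessian matrix $\mathbb{E}_{H_0}[\psi'(y-\mathbf{x}'\mathbf{\beta}_M(H_0))\mathbf{x}\mathbf{x}'] + 2\lambda\,\diag(J''(\mathbf{\beta}_M(H_0)))$ is nonsingular, which is where (local) convexity of $\rho$ and $J$ enters. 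Under these assumptions the algebra above is the whole proof.
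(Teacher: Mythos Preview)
Your proposal is correct and follows essentially the same approach as the paper: write the first-order condition at the contaminated distribution $H_\epsilon$, expand the expectation as $(1-\epsilon)\mathbb{E}_{H_0}[\cdot]+\epsilon(\cdot)|_{(\mathbf{x}_0,y_0)}$, differentiate in $\epsilon$, evaluate at $\epsilon=0$, and solve the resulting linear system for the influence function. Your additional remarks on why the term $\mathbb{E}_{H_0}[\psi(y-\mathbf{x}'\mathbf{\beta}_M(H_0))\mathbf{x}]$ does not vanish and on the regularity conditions needed for the implicit differentiation go slightly beyond what the paper's proof spells out, but the argument is the same.
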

\noindent The influence function (\ref{eq:IFgen}) of the penalized M-functional is unbounded in $\mathbf{x}_0$ and is only bounded in $y_0$ if $\psi(\cdot)$ is bounded. In Section \ref{sec:Plot_IF} we will see that the effect of the penalty on the shape of the influence function is quite small compared to the effect of the loss function. 

As the ridge functional can be seen as a special case of the penalized M-functional (\ref{eq:betaM}), its influence function follows as a corollary:
\begin{cor}
The influence function of the ridge functional $\mathbf{\beta}_{RIDGE}$ is
\begin{align}
IF(&(\mathbf{x}_0, y_0),\mathbf{\beta}_{RIDGE}, H_0) = \notag\\
&\big(\mathbb{E}_{H_0}[\mathbf{xx}']+2\lambda I_p\big)^{-1}\bigg(\big(y_0-\mathbf{x}_0'\mathbf{\beta}_{RIDGE}(H_0)\big)\mathbf{x}_0 + \mathbb{E}_{H_0}\big[\mathbf{x}\mathbf{x}'\big]\Bias(\mathbf{\beta}_{RIDGE},H_0)\bigg).
\label{eq:IFRidge}
\end{align}
\end{cor}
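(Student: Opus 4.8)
The plan is to obtain the corollary as a direct specialisation of Proposition~\ref{prop:gen}. The ridge functional is the penalized M-functional corresponding to $\rho(z)=z^2$ and $J(z)=z^2$, and both functions are twice differentiable, so the proposition applies with $\psi(z)=\rho'(z)=2z$, $\psi'(z)\equiv 2$ and $J''(z)\equiv 2$, hence $\diag(J''(\mathbf{\beta}_{RIDGE}(H_0)))=2I_p$. First I would substitute these into the two factors of (\ref{eq:IFgen}): the matrix to be inverted becomes $\mathbb{E}_{H_0}[2\,\mathbf{xx}']+2\lambda\cdot 2I_p=2\big(\mathbb{E}_{H_0}[\mathbf{xx}']+2\lambda I_p\big)$, so its inverse contributes a scalar factor $\frac{1}{2}$, while the bracket on the right becomes $2\big(y_0-\mathbf{x}_0'\mathbf{\beta}_{RIDGE}(H_0)\big)\mathbf{x}_0-2\,\mathbb{E}_{H_0}[(y-\mathbf{x}'\mathbf{\beta}_{RIDGE}(H_0))\mathbf{x}]$, contributing a scalar factor $2$. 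The two scalar factors cancel.

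It then remains to rewrite the term $-\mathbb{E}_{H_0}[(y-\mathbf{x}'\mathbf{\beta}_{RIDGE}(H_0))\mathbf{x}]$ into the form appearing in (\ref{eq:IFRidge}). Here I would invoke the model equation $y=\mathbf{x}'\mathbf{\beta}_0+e$ together with the standing assumptions that $e$ is centered and independent of $\mathbf{x}$: this gives $\mathbb{E}_{H_0}[(y-\mathbf{x}'\mathbf{\beta}_{RIDGE}(H_0))\mathbf{x}]=\mathbb{E}_{H_0}[\mathbf{xx}'](\mathbf{\beta}_0-\mathbf{\beta}_{RIDGE}(H_0))+\mathbb{E}_{H_0}[e]\,\mathbb{E}_{H_0}[\mathbf{x}]=-\mathbb{E}_{H_0}[\mathbf{xx}']\,\Bias(\mathbf{\beta}_{RIDGE},H_0)$, using the definition (\ref{eq:bias}) of the bias. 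Substituting this identity into the expression obtained after the cancellation yields exactly (\ref{eq:IFRidge}).

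The computation is essentially mechanical; the only place that uses the structural hypotheses rather than pure algebra is this last identity, where one needs $e$ centered and independent of $\mathbf{x}$ so that the cross term $\mathbb{E}_{H_0}[e\mathbf{x}]$ vanishes and the residual expectation can be reinterpreted as (minus) a bias. The invertibility of $\mathbb{E}_{H_0}[\mathbf{xx}']+2\lambda I_p$ needed to state the influence function is inherited from Proposition~\ref{prop:gen} for $\lambda\ge 0$ under the same assumptions, so it need not be reproved here.
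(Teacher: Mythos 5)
Your proposal is correct and matches the paper's approach: the corollary is stated as an immediate specialisation of Proposition~\ref{prop:gen} with $\rho(z)=J(z)=z^2$, and your cancellation of the factors of $2$ and the rewriting of $-\mathbb{E}_{H_0}[(y-\mathbf{x}'\mathbf{\beta}_{RIDGE}(H_0))\mathbf{x}]$ as $\mathbb{E}_{H_0}[\mathbf{xx}']\Bias(\mathbf{\beta}_{RIDGE},H_0)$ via the centered, independent error term are exactly the computations the paper leaves implicit.
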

\noindent As the function $\psi(z)=2z$ is unbounded, the influence function (\ref{eq:IFRidge}) of the ridge functional is unbounded. Thus the ridge functional is not robust to any kind of outliers.

The penalty function $J(z):=|z|$ of the lasso functional and the sparse LTS functional is not twice differentiable at zero. Therefore those functionals are no special cases of the M-functional used in Proposition \ref{prop:gen} and have to be considered separately to derive the influence function.

%% SPECIAL CASE - LASSO

\section{The Influence Function of the Lasso}
\label{sec:IF_lasso}

For simple regression, i.e. for $p=1$, an explicit solution for the lasso functional exists, see Equation (\ref{eq:lassoFunct}). With that the influence function can be computed easily.

\begin{lemma}
\label{lemma:lasso_uni}
Let $y=x\beta_0+e$ be a simple regression model as in (\ref{eq:ModelF}). Then the influence function of the lasso functional is
\begin{align}
IF((x_0,y_0), \beta_{LASSO}, H_0) = \begin{cases}
0 &\text{ if }-\frac{\lambda}{\mathbb{E}_{H_0}[x^2]}\leq \beta_0<\frac{\lambda}{\mathbb{E}_{H_0}[x^2]}\\
\frac{x_0(y_0-\beta_0 x_0)}{\mathbb{E}_{H_0}[x^2]}-\lambda\frac{\mathbb{E}_{H_0}[x^2]-x_0^2}{\left(\mathbb{E}_{H_0}[x^2]\right)^2}\sign(\beta_0) &\text{ otherwise}.
\end{cases}
\label{eq:IF_lasso_uni}
\end{align}
\end{lemma}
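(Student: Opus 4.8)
The plan is to differentiate the explicit expression (\ref{eq:lassoFunct}) for $\beta_{LASSO}(H)$ with respect to the contamination fraction $\epsilon$ and evaluate at $\epsilon=0$. Write $H_\epsilon=(1-\epsilon)H_0+\epsilon\,\delta_{(x_0,y_0)}$ and observe that, through $\beta_{LS}(H)=\mathbb{E}_H[xy]/\mathbb{E}_H[x^2]$, the lasso functional $\beta_{LASSO}(H)$ depends on $H$ only via the two scalar moments $m_1(H):=\mathbb{E}_H[xy]$ and $m_2(H):=\mathbb{E}_H[x^2]$. Since expectations of fixed integrands are affine in the mixing distribution, the derivatives at $\epsilon=0$ of $m_1(H_\epsilon)$ and $m_2(H_\epsilon)$ are $x_0y_0-\mathbb{E}_{H_0}[xy]$ and $x_0^2-\mathbb{E}_{H_0}[x^2]$, respectively. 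At the model distribution $e$ is centred and independent of $x$, so $\mathbb{E}_{H_0}[xy]=\beta_0\,\mathbb{E}_{H_0}[x^2]$ and $\beta_{LS}(H_0)=\beta_0$; the quotient rule then yields the classical building block $IF((x_0,y_0),\beta_{LS},H_0)=x_0(y_0-x_0\beta_0)/\mathbb{E}_{H_0}[x^2]$, while the derivative at $\epsilon=0$ of $1/m_2(H_\epsilon)$ equals $-(x_0^2-\mathbb{E}_{H_0}[x^2])/(\mathbb{E}_{H_0}[x^2])^2$.

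Next I would split according to the two regimes created by the positive-part operator in (\ref{eq:lassoFunct}). If $-\lambda/\mathbb{E}_{H_0}[x^2]\leq\beta_0<\lambda/\mathbb{E}_{H_0}[x^2]$, then $|\beta_{LS}(H_0)|-\lambda/m_2(H_0)\leq 0$, and by continuity of $\epsilon\mapsto\beta_{LS}(H_\epsilon)$ and $\epsilon\mapsto m_2(H_\epsilon)$ this quantity stays $\leq 0$ for all sufficiently small $\epsilon\geq 0$; hence $\beta_{LASSO}(H_\epsilon)=0$ on a right-neighbourhood of $0$ and the influence function vanishes (this covers $\beta_0=0$ whenever $\lambda>0$, and at the left endpoint one checks directly that the right derivative is still $0$). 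If instead $|\beta_0|>\lambda/\mathbb{E}_{H_0}[x^2]$, the same continuity argument gives $\sign(\beta_{LS}(H_\epsilon))=\sign(\beta_0)$ and $|\beta_{LS}(H_\epsilon)|>\lambda/m_2(H_\epsilon)$ for small $\epsilon$, so on this neighbourhood $\beta_{LASSO}(H_\epsilon)=\beta_{LS}(H_\epsilon)-\sign(\beta_0)\,\lambda/m_2(H_\epsilon)$, a smooth function of $\epsilon$.

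Finally I would differentiate this identity at $\epsilon=0$. The first term contributes $x_0(y_0-x_0\beta_0)/\mathbb{E}_{H_0}[x^2]$, and the second contributes $-\sign(\beta_0)\,\lambda\cdot\big(-(x_0^2-\mathbb{E}_{H_0}[x^2])/(\mathbb{E}_{H_0}[x^2])^2\big)=-\lambda\,\sign(\beta_0)\,(\mathbb{E}_{H_0}[x^2]-x_0^2)/(\mathbb{E}_{H_0}[x^2])^2$; summing the two pieces gives precisely the stated formula. I expect the only genuinely delicate point to be the justification that, inside each regime, the non-smooth operations are frozen at $\epsilon=0$ — the $\sign$ is locally constant and the positive part locally reduces to its argument — so that term-by-term differentiation is legitimate. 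This is exactly what forces the exclusion of the knife-edge case $|\beta_0|=\lambda/\mathbb{E}_{H_0}[x^2]$, at which $\beta_{LASSO}(H_\epsilon)$ is not differentiable in $\epsilon$, and it rests only on the continuity of the elementary moment functionals $m_1,m_2$ in $\epsilon$.
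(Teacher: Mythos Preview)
Your proposal is correct and follows essentially the same route as the paper: both start from the explicit formula~(\ref{eq:lassoFunct}), split into the regime where the positive part vanishes versus where it is active, freeze the sign and positive-part operators locally, and then differentiate the remaining smooth expression in~$\epsilon$ at~$0$. The only cosmetic difference is that you organise the computation through the two moment functionals $m_1,m_2$ and the chain rule, whereas the paper expands $H_\epsilon$ directly inside the formula and invokes ``almost everywhere'' for the non-smooth pieces; in particular, your aside that the right derivative at the left endpoint is still~$0$ is not needed (and not quite true for all $(x_0,y_0)$), since the paper simply treats the knife-edge values as a null set.
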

\noindent Similar to the influence function of the ridge functional (\ref{eq:IFRidge}), the influence function of the lasso functional (\ref{eq:IF_lasso_uni}) is unbounded in both variables $x_0$ and $y_0$ in case the coefficient $\beta_{LASSO}$ is not shrunk to zero (Case $2$ in Equation (\ref{eq:IF_lasso_uni})). Otherwise the influence function is constantly zero. The reason of the similarity of the influence function of the lasso and the ridge functional is that both are a shrunken version of the least squares functional. 

As there is no explicit solution in multiple regression for the lasso functional, its influence function cannot be computed easily. However, \cite{Friedman} and \cite{Fu} found an algorithm, the \textit{coordinate descent algorithm} (also \textit{shooting algorithm}), to split up the multiple regression into a number of simple regressions. The idea of the coordinate descent algorithm at population level is to compute the lasso functional (\ref{eq:lassoMulti}) variable by variable. Repeatedly, one variable $j\in\{1,\ldots,p\}$ is selected. The value of the functional $\beta_j^{cd}$ is then computed holding all other coefficients $k\neq j$ fixed at their previous value $\beta_k^\ast$
\begin{align}
\beta_j^{cd}(H) &= \argmin_{\beta_j\in \mathbb{R}}\mathbb{E}_H[((y-\sum_{k\neq j} x_k\beta_k^\ast) - x_j\beta_j)^2] + 2\lambda\sum_{k\neq j} |\beta_k^\ast| + 2\lambda|\beta_j| \notag\\
&=\argmin_{\beta_j\in \mathbb{R}}\mathbb{E}_H[((y-\sum_{k\neq j} x_k\beta_k^\ast) - x_j\beta_j)^2] + 2\lambda|\beta_j|.\label{eq:lassoshoot}
\end{align}
This can be seen as simple lasso regression with partial residuals $y-\sum_{k\neq j} x_k\beta^\ast_k$ as response and the $j$th coordinate $x_j$ as covariate. Thus, the new value of $\beta_j^{cd}(H)$ can be easily computed using Equation (\ref{eq:lassoFunct}). Looping through all variables repeatedly, convergence to the lasso functional (\ref{eq:lassoMulti}) will be reached for any starting value \citep[][]{Friedman, Tseng}.

For the coordinate descent algorithm an influence function can be computed similarly as for simple regression. However, now the influence function depends on the influence function of the previous value $\mathbf{\beta}^\ast$.

\begin{lemma}
\label{lemma:lasso_shooting}
Let $y=\mathbf{x}'\beta_0+e$ be the regression model of (\ref{eq:ModelF}). Then the influence function of the jth coordinate of the lasso functional (\ref{eq:lassoshoot}) computed via coordinate descent is
\begin{align}
IF(&(\mathbf{x}_0,y_0),\beta^{cd}_j,H_0)=\begin{cases}
&\hspace{-0.3cm}0 \hspace{4.5cm}\text{ if }\quad\left|\mathbb{E}_{H_0}[x_j \tilde{y}^{(j)}]\right| < \lambda,\\
&\hspace{-0.3cm}\frac{-\mathbb{E}_{H_0}[x_j{\mathbf{x}^{(j)}}'IF((\mathbf{x}_0,y_0), \,\mathbf{\beta^\ast}^{(j)}, H_0)] + (y_0-{\mathbf{x}_0^{(j)}}'\mathbf{\beta^\ast}^{(j)}(H_0)) (\mathbf{x}_0)_j}{\mathbb{E}_{H_0}[x_j^2]} - \frac{\mathbb{E}_{H_0}[x_j\tilde{y}^{(j)}](\mathbf{x}_0)_j^2}{(\mathbb{E}_{H_0}[x_j^2])^2}\\
&\hspace{2cm}-\lambda\frac{\mathbb{E}_{H_0}[x_j^2]-(\mathbf{x}_0)_j^2}{\left(\mathbb{E}_{H_0}[x_j^2]\right)^2}\,\sign(\mathbb{E}_{H_0}[x_j\tilde{y}^{(j)}]) \text{ otherwise},
\end{cases}
\label{eq:IF_shoot}
\end{align}
where for any vector $\mathbf{z}$ we define $\mathbf{z}^{(j)}= (z_1,\ldots, z_{j-1},z_{j+1},\ldots,z_p)'$, $\tilde{y}^{(j)}:= y - {\mathbf{x}^{(j)}}'{\mathbf{\beta}^\ast}^{(j)}(H_0)$, with ${\mathbf{\beta}^\ast}^{(j)}$ the functional representing the value of the coordinate descent algorithm at population level in the previous step.
 \end{lemma}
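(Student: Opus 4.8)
\noindent The plan is to reduce the coordinate descent update to a simple lasso regression and then differentiate its closed form, the twist being that the ``response'' of that simple regression is itself distribution-dependent. As observed around (\ref{eq:lassoshoot}), the $j$th update minimizes $\beta_j\mapsto\mathbb{E}_H[(\tilde{y}^{(j)}-x_j\beta_j)^2]+2\lambda|\beta_j|$, i.e.\ a simple lasso regression of the partial residual $\tilde{y}^{(j)}=y-{\mathbf{x}^{(j)}}'\mathbf{\beta^\ast}^{(j)}(H)$ on the single covariate $x_j$. The derivation of (\ref{eq:lassoFunct}) uses only the quadratic-in-$\beta_j$ structure of this objective, so it applies verbatim with $\tilde{y}^{(j)}$ in the role of the response, yielding
\[
\beta_j^{cd}(H)=\sign\!\big(g(H)\big)\left(\frac{|g(H)|-\lambda}{\mathbb{E}_H[x_j^2]}\right)_{\!+},\qquad g(H):=\mathbb{E}_H[x_j\tilde{y}^{(j)}]=\mathbb{E}_H[x_jy]-\mathbb{E}_H[x_j{\mathbf{x}^{(j)}}']\,\mathbf{\beta^\ast}^{(j)}(H).
\]
The point to keep in mind is that $g(H)$ depends on $H$ both through the expectation and through the previous-step functional $\mathbf{\beta^\ast}^{(j)}$; this is precisely what makes the statement differ from Lemma~\ref{lemma:lasso_uni}.

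Writing $H_\epsilon=(1-\epsilon)H_0+\epsilon\delta_{(\mathbf{x}_0,y_0)}$, I would then split into the two regimes dictated by the positive part. If $|g(H_0)|<\lambda$, continuity of $\epsilon\mapsto g(H_\epsilon)$ forces $\beta_j^{cd}(H_\epsilon)\equiv 0$ for $\epsilon$ small, so the influence function is $0$; this is the first branch of (\ref{eq:IF_shoot}). If $|g(H_0)|>\lambda$, the positive part is inactive and $\sign(g(H_\epsilon))$ is locally constant, so near $\epsilon=0$ one has the smooth expression $\beta_j^{cd}(H_\epsilon)=\big(g(H_\epsilon)-\lambda\,\sign(g(H_0))\big)/\mathbb{E}_{H_\epsilon}[x_j^2]$.

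The remaining work is to differentiate this at $\epsilon=0$. I would use the elementary rule $\tfrac{\partial}{\partial\epsilon}\mathbb{E}_{H_\epsilon}[f]\big|_{0}=f(\mathbf{x}_0,y_0)-\mathbb{E}_{H_0}[f]$ for $f\in\{x_j^2,\,x_jy,\,x_j{\mathbf{x}^{(j)}}'\}$, together with the product rule applied to $\mathbb{E}_{H_\epsilon}[x_j{\mathbf{x}^{(j)}}']\,\mathbf{\beta^\ast}^{(j)}(H_\epsilon)$ --- whose second factor produces the term $\mathbb{E}_{H_0}[x_j{\mathbf{x}^{(j)}}'\,IF((\mathbf{x}_0,y_0),\mathbf{\beta^\ast}^{(j)},H_0)]$ --- and the quotient rule for the division by $\mathbb{E}_{H_\epsilon}[x_j^2]$. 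Collecting terms gives $\tfrac{\partial}{\partial\epsilon}g(H_\epsilon)\big|_{0}=(\mathbf{x}_0)_j\big(y_0-{\mathbf{x}_0^{(j)}}'\mathbf{\beta^\ast}^{(j)}(H_0)\big)-g(H_0)-\mathbb{E}_{H_0}[x_j{\mathbf{x}^{(j)}}'\,IF(\cdot)]$, and after dividing by $\mathbb{E}_{H_0}[x_j^2]$ the $\mp\,g(H_0)/\mathbb{E}_{H_0}[x_j^2]$ pieces coming from this and from the quotient rule cancel, leaving exactly the second branch of (\ref{eq:IF_shoot}) once one recalls $g(H_0)=\mathbb{E}_{H_0}[x_j\tilde{y}^{(j)}]$.

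I expect the only genuine obstacle to be conceptual rather than computational: one must resist simply quoting Lemma~\ref{lemma:lasso_uni}, because the response $\tilde{y}^{(j)}$ in (\ref{eq:lassoshoot}) moves with the contamination through $\mathbf{\beta^\ast}^{(j)}$, so the chain rule must be carried through that dependence --- which is exactly where the influence function of the previous step enters the formula. Everything else is bookkeeping: the case split and the local constancy of $\sign(g)$ dispose of the non-smoothness of $|\cdot|$, and tracking the $g(H_0)$ terms carefully exhibits the cancellation in the last step.
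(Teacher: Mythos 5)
Your proposal is correct and follows essentially the same route as the paper: write the coordinate-wise update in the closed form of the simple-regression lasso with the partial residual $\tilde{y}^{(j)}$ as response, split on whether the soft threshold is active, and differentiate at the contaminated distribution while carrying the $\epsilon$-dependence of $\mathbf{\beta^\ast}^{(j)}(H_\epsilon)$ through the chain rule, which is exactly how the term $\mathbb{E}_{H_0}[x_j{\mathbf{x}^{(j)}}'IF(\mathbf{\beta^\ast}^{(j)})]$ arises in the paper's proof as well. The only cosmetic differences are that you argue local constancy of the sign in the regime $|g(H_0)|>\lambda$ where the paper invokes that the derivative of the sign term vanishes almost everywhere, and your cancellation of the $g(H_0)/\mathbb{E}_{H_0}[x_j^2]$ terms reproduces the paper's quotient-rule bookkeeping.
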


\noindent To obtain a formula for the influence function of the lasso functional in multiple regression, we can use the result of Lemma \ref{lemma:lasso_shooting}. The following proposition holds.

\begin{prop}
\label{prop:lasso_multi}
Let $y=\mathbf{x}'\beta_0+e$ be the regression model of (\ref{eq:ModelF}). Without loss of generality let $\mathbf{\beta}_{LASSO}(H_0)=((\mathbf{\beta}_{LASSO}(H_0))_1,\ldots,(\mathbf{\beta}_{LASSO}(H_0))_k,0,\ldots,0)'$ with $k\leq p$ and $(\mathbf{\beta}_{LASSO}(H_0))_j\neq 0\,\forall j=1,\ldots,k$. Then the influence function of the lasso functional (\ref{eq:lassoMulti}) is 
\begin{align}
IF(&(\mathbf{x}_0,y_0), \mathbf{\beta}_{LASSO},H_0) =\label{eq:IF_lasso_multi}\\
&=\begin{pmatrix} \left(\mathbb{E}_{H_0}[\mathbf{x}_{1:k}\mathbf{x}_{1:k}']\right)^{-1}\bigg((\mathbf{x}_0)_{1:k}(y_0-\mathbf{x}_0'\mathbf{\beta}_{LASSO}(H_0))-\mathbb{E}_{H_0}[\mathbf{x}_{1:k}(y-\mathbf{x}'\mathbf{\beta}_{LASSO}(H_0))]\bigg)\\
\mathbf{0}_{p-k}
\end{pmatrix}\notag
\end{align}
with the notation $\mathbf{z}_{r:s}=(z_r, z_{r+1},\ldots,z_{s-1}, z_s)'$ for $\mathbf{z}\in\mathbb{R}^p$, $r,s\in\{1,\ldots,p\}$ and $r\leq s$.
\end{prop}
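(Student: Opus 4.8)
The plan is to obtain the influence function of the multiple-regression lasso functional by passing to the limit in the coordinate descent recursion of Lemma \ref{lemma:lasso_shooting}. Fix the contamination point $(\mathbf{x}_0,y_0)$ and write $\mathbf{\beta}^\ast$ for a generic iterate of the coordinate descent algorithm run at the contaminated distribution $H_\epsilon=(1-\epsilon)H_0+\epsilon\delta_{(\mathbf{x}_0,y_0)}$, started from the true value $\mathbf{\beta}_0$. Since convergence to the lasso functional holds for any starting value \citep{Friedman,Tseng}, at the fixed point every coordinate $j$ satisfies the stationarity condition that defines $\beta_j^{cd}$ in (\ref{eq:lassoshoot}); differentiating this fixed-point system in $\epsilon$ at $\epsilon=0$ will yield a linear system for $\mathrm{IF}((\mathbf{x}_0,y_0),\mathbf{\beta}_{LASSO},H_0)$. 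By the assumed ordering, coordinates $1,\dots,k$ have $(\mathbf{\beta}_{LASSO}(H_0))_j\neq0$ and coordinates $k+1,\dots,p$ are exactly zero.

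First I would dispose of the zero block. For $j>k$ the quantity $\mathbb{E}_{H_0}[x_j\tilde y^{(j)}]$ equals the partial-residual correlation at the solution, which has absolute value strictly below $\lambda$ (strict, generically — this is the standard active-set characterization of the lasso minimiser, and I would note that on the boundary case the IF is taken as zero by convention as in Lemma \ref{lemma:lasso_uni}); hence by the first branch of (\ref{eq:IF_shoot}) the influence function of each such coordinate is $0$, giving the $\mathbf{0}_{p-k}$ block. This also means that in all the remaining computations the previous-step functional ${\mathbf{\beta}^\ast}^{(j)}$ and the limit functional $\mathbf{\beta}_{LASSO}(H_0)$ agree on their last $p-k$ entries (all zero) and their influence functions agree there (all zero), so only the first $k$ coordinates genuinely interact.

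Next I would restrict to $j\in\{1,\dots,k\}$ and use the second branch of (\ref{eq:IF_shoot}). At the fixed point, $\mathbf{\beta}^\ast=\mathbf{\beta}_{LASSO}(H_0)$ and $\mathrm{IF}((\mathbf{x}_0,y_0),{\mathbf{\beta}^\ast}^{(j)},H_0)$ is the $(p-1)$-vector obtained by deleting the $j$th entry of $\mathrm{IF}((\mathbf{x}_0,y_0),\mathbf{\beta}_{LASSO},H_0)$; call the full $p$-vector $\mathbf{V}$. The term $\mathbb{E}_{H_0}[x_j\tilde y^{(j)}]$ simplifies: using $\tilde y^{(j)}=y-{\mathbf{x}^{(j)}}'{\mathbf{\beta}_{LASSO}(H_0)}^{(j)}$ and the stationarity relation $\mathbb{E}_{H_0}[x_j(y-\mathbf{x}'\mathbf{\beta}_{LASSO}(H_0))]=\lambda\,\sign((\mathbf{\beta}_{LASSO}(H_0))_j)$ together with $\mathbb{E}_{H_0}[x_j^2](\mathbf{\beta}_{LASSO}(H_0))_j = \mathbb{E}_{H_0}[x_j\tilde y^{(j)}]-\lambda\,\sign(\cdot)$, one checks that the two $\lambda$-dependent contributions in (\ref{eq:IF_shoot}) cancel against the $\mathbb{E}_{H_0}[x_j\tilde y^{(j)}](\mathbf{x}_0)_j^2/(\mathbb{E}_{H_0}[x_j^2])^2$ term after substitution, leaving, for each $j\le k$,
\begin{align*}
\mathbb{E}_{H_0}[x_j^2]\,V_j + \mathbb{E}_{H_0}\big[x_j{\mathbf{x}^{(j)}}'\big]\mathbf{V}^{(j)}
= (\mathbf{x}_0)_j\big(y_0-\mathbf{x}_0'\mathbf{\beta}_{LASSO}(H_0)\big) - \mathbb{E}_{H_0}\big[x_j(y-\mathbf{x}'\mathbf{\beta}_{LASSO}(H_0))\big].
\end{align*}
Stacking these $k$ equations, the left-hand side is exactly the $j$th row of $\mathbb{E}_{H_0}[\mathbf{x}_{1:k}\mathbf{x}_{1:k}']\,\mathbf{V}_{1:k}$ (the zero entries of $\mathbf{V}$ beyond index $k$ drop out), and the right-hand side is the $j$th entry of $(\mathbf{x}_0)_{1:k}(y_0-\mathbf{x}_0'\mathbf{\beta}_{LASSO}(H_0)) - \mathbb{E}_{H_0}[\mathbf{x}_{1:k}(y-\mathbf{x}'\mathbf{\beta}_{LASSO}(H_0))]$. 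Inverting the $k\times k$ matrix $\mathbb{E}_{H_0}[\mathbf{x}_{1:k}\mathbf{x}_{1:k}']$ gives (\ref{eq:IF_lasso_multi}).

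The main obstacle I anticipate is making the limiting argument rigorous: one must justify interchanging $\partial/\partial\epsilon|_{\epsilon=0}$ with the (infinite) coordinate-descent iteration, i.e. that the influence functions of the iterates converge to the influence function of the limit, and that no coordinate changes its active/inactive status under an infinitesimal perturbation (so that the branch selection in (\ref{eq:IF_shoot}) is locally constant in $\epsilon$). The cleaner route, which I would adopt to sidestep this, is to bypass the iteration entirely and differentiate the fixed-point (stationarity) system directly — the active-set conditions $\mathbb{E}_{H_\epsilon}[x_j(y-\mathbf{x}'\mathbf{\beta}_{LASSO}(H_\epsilon))]=\lambda\,\sign((\mathbf{\beta}_{LASSO}(H_0))_j)$ for $j\le k$ (valid for small $\epsilon$ by continuity of the solution and strictness of the inactive inequalities) and $(\mathbf{\beta}_{LASSO}(H_\epsilon))_j=0$ for $j>k$ — which is precisely the system solved above and avoids any convergence bookkeeping; the coordinate-descent derivation via Lemma \ref{lemma:lasso_shooting} then serves as a consistency check.
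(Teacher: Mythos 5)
Your proposal is correct and follows essentially the same route as the paper: the zero block comes from the first branch of Lemma \ref{lemma:lasso_shooting} (the paper is equally cavalier about the boundary case $|\mathbb{E}_{H_0}[x_j\tilde y^{(j)}]|=\lambda$, so your explicit flag is if anything more careful), and the active block reduces to the linear system $\mathbb{E}_{H_0}[\mathbf{x}_{1:k}\mathbf{x}_{1:k}']\mathbf{V}_{1:k}=(\mathbf{x}_0)_{1:k}(y_0-\mathbf{x}_0'\mathbf{\beta}_{LASSO}(H_0))-\mathbb{E}_{H_0}[\mathbf{x}_{1:k}(y-\mathbf{x}'\mathbf{\beta}_{LASSO}(H_0))]$, which you verify correctly. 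The ``cleaner route'' you describe at the end --- differentiating the active-set stationarity conditions at $H_\epsilon$ directly for $j\le k$ --- is precisely what the paper does, so your algebraic derivation via the second branch of (\ref{eq:IF_shoot}) is a correct but redundant consistency check rather than a genuinely different argument.
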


Thus, the influence function of the lasso estimator is zero for variables $j$ with coefficients $(\mathbf{\beta}_{LASSO}(H_0))_j$ shrunk to zero. This implies that for an infinitesimal amount of contamination, the lasso estimator in those variables $j$ stays $(\mathbf{\beta}_{LASSO}(H_0))_j=0$ and is not affected by the contamination.

Another approach to compute the influence function of the lasso functional is to consider it as a limit case of functionals satisfying the conditions of Proposition \ref{prop:gen}. The following sequence of hyperbolic tangent functions converges to the sign-function
\begin{align*}
\lim_{K\rightarrow\infty} \tanh(K x) =
\begin{cases}
+1 & \text{ if } x>0,\\
-1 & \text{ if } x<0,\\
0 & \text{ if } x=0.
\end{cases}
\end{align*}
Hence, it can be used to get a smooth approximation of the absolute value function
\begin{align}
|x| = x\cdot\sign(x) = \lim_{K\rightarrow\infty} x\cdot \tanh(Kx).
\label{eq:tanh_approx}
\end{align}
The larger the value of $K>1$, the better the approximation becomes (see Figure~\ref{fig:tanh}).
\begin{figure}
\begin{center}
	\includegraphics[width = 0.65\textwidth]{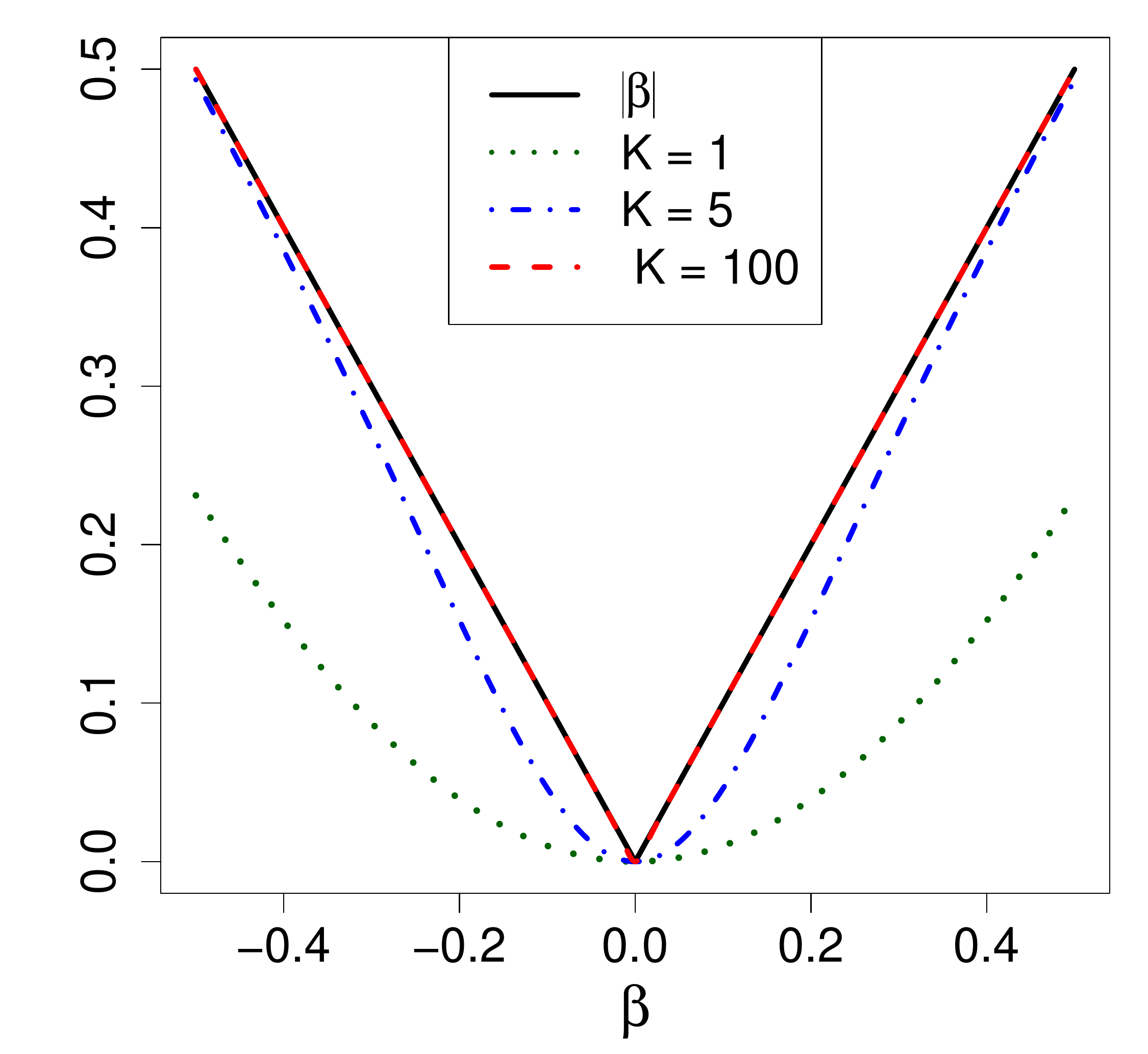}
\caption{Approximation of $|\beta|$ using $\beta\cdot\tanh(K\beta)$}
\label{fig:tanh}
\end{center}
\end{figure}	
Therefore the penalty function $J_K(\beta_j) = \beta_j\tanh(K\beta_j)$ is an approximation of $J_{LASSO}(\beta_j) = |\beta_j|$. As $J_K$ is a smooth function, the influence function of the corresponding functional 
\begin{align}
\mathbf{\beta}_K(H_0) = \argmin_{\mathbf{\beta}\in\mathbb{R}^p} \mathbb{E}_{H_0}[(y-\mathbf{x}'\mathbf{\beta})^2] + 2\lambda \sum_{j=1}^p J_K(\beta_j)
\label{eq:beta_tan}
\end{align}
can be computed by applying Proposition \ref{prop:gen}. Taking the limit of this influence function, we obtain the influence function of the lasso functional. It coincides with the expression given in Proposition \ref{prop:lasso_multi}.

\begin{lemma}
\label{lemma:approx_tan}
Let $y=\mathbf{x}'\beta_0+e$ be the regression model of (\ref{eq:ModelF}). Without loss of generality let $\mathbf{\beta}_{LASSO}(H_0)=((\mathbf{\beta}_{LASSO}(H_0))_1,\ldots,(\mathbf{\beta}_{LASSO}(H_0))_k,0,\ldots,0)'$ with $k\leq p$ and $(\mathbf{\beta}_{LASSO}(H_0))_j\neq 0\,\forall j=1,\ldots,k$. Then the influence function of the penalized M-estimator (\ref{eq:beta_tan}) converges to the influence function of the lasso functional given in (\ref{eq:IF_lasso_multi}) as $K$ tends to infinity.
\end{lemma}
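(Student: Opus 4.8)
The plan is to apply Proposition~\ref{prop:gen} to the penalized M-functional $\mathbf{\beta}_K$ of~(\ref{eq:beta_tan}): it has $\rho(z)=z^2$, hence $\psi(z)=2z$ and $\psi'(z)=2$, and the twice-differentiable penalty $J=J_K$ with $J_K(\beta)=\beta\tanh(K\beta)$. For every fixed $K$ the proposition gives $IF((\mathbf{x}_0,y_0),\mathbf{\beta}_K,H_0)=A_K^{-1}b_K$, where $A_K=2\,\mathbb{E}_{H_0}[\mathbf{xx}']+2\lambda\diag\!\big(J_K''(\mathbf{\beta}_K(H_0))\big)$ and $b_K=2\big(y_0-\mathbf{x}_0'\mathbf{\beta}_K(H_0)\big)\mathbf{x}_0-2\,\mathbb{E}_{H_0}\big[(y-\mathbf{x}'\mathbf{\beta}_K(H_0))\mathbf{x}\big]$. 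The lemma then amounts to evaluating $\lim_{K\to\infty}A_K^{-1}b_K$ and identifying it with~(\ref{eq:IF_lasso_multi}).

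First I would collect two elementary facts. (i) For each fixed $\beta\neq0$ the map $K\mapsto\beta\tanh(K\beta)$ is increasing with limit $|\beta|$, so the objective $Q_K(\mathbf{\beta})=\mathbb{E}_{H_0}[(y-\mathbf{x}'\mathbf{\beta})^2]+2\lambda\sum_jJ_K(\beta_j)$ increases pointwise to the lasso objective~(\ref{eq:lassoMulti}); together with uniqueness of the lasso minimizer (positive definiteness of $\mathbb{E}_{H_0}[\mathbf{xx}']$) this yields $\mathbf{\beta}_K(H_0)\to\mathbf{\beta}_{LASSO}(H_0)$, and hence $b_K\to b_\infty:=2(y_0-\mathbf{x}_0'\mathbf{\beta}_{LASSO}(H_0))\mathbf{x}_0-2\,\mathbb{E}_{H_0}[(y-\mathbf{x}'\mathbf{\beta}_{LASSO}(H_0))\mathbf{x}]$. (ii) A direct differentiation gives $J_K''(\beta)=2K\,\mathrm{sech}^2(K\beta)\big(1-K\beta\tanh(K\beta)\big)$.

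The core of the argument is the limit of the diagonal matrix $\diag(J_K''(\mathbf{\beta}_K(H_0)))$, treated coordinate by coordinate via (ii). For an active coordinate $j\le k$ one has $(\mathbf{\beta}_K(H_0))_j\to(\mathbf{\beta}_{LASSO}(H_0))_j\neq0$, so $K(\mathbf{\beta}_K(H_0))_j\to\pm\infty$ and the exponential decay of $\mathrm{sech}^2$ beats the polynomial-in-$K$ factor, giving $J_K''((\mathbf{\beta}_K(H_0))_j)\to0$. For an inactive coordinate $j>k$ one uses the $j$th stationarity equation of~(\ref{eq:beta_tan}), $\lambda\big(\tanh t_{j,K}+t_{j,K}\,\mathrm{sech}^2 t_{j,K}\big)=\mathbb{E}_{H_0}[x_j(y-\mathbf{x}'\mathbf{\beta}_K(H_0))]$ with $t_{j,K}:=K(\mathbf{\beta}_K(H_0))_j$, whose right-hand side converges to $g_j:=\mathbb{E}_{H_0}[x_j(y-\mathbf{x}'\mathbf{\beta}_{LASSO}(H_0))]$ with $|g_j|\le\lambda$ by the lasso subgradient condition; this confines $t_{j,K}$ to a compact set avoiding the points $\pm t^\ast$ where $t\tanh t=1$, so both $\mathrm{sech}^2 t_{j,K}$ and $1-t_{j,K}\tanh t_{j,K}$ stay bounded away from $0$ and $J_K''((\mathbf{\beta}_K(H_0))_j)\sim 2K\cdot(\text{positive constant})\to+\infty$. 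Writing $A_K$ in block form along the partition $S=\{1,\dots,k\}$, $N=\{k+1,\dots,p\}$, the $(S,S)$ block then tends to $2\,\mathbb{E}_{H_0}[\mathbf{x}_{1:k}\mathbf{x}_{1:k}']$, the off-diagonal blocks are constant, and the $(N,N)$ block has least eigenvalue tending to $+\infty$; a Schur-complement computation gives $A_K^{-1}\to\diag\!\big((2\,\mathbb{E}_{H_0}[\mathbf{x}_{1:k}\mathbf{x}_{1:k}'])^{-1},\,\mathbf{0}\big)$. Multiplying by $b_K\to b_\infty$ and cancelling the factor $2$ reproduces exactly~(\ref{eq:IF_lasso_multi}).

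The main obstacle is the inactive block $j>k$: one must (a) justify $\mathbf{\beta}_K(H_0)\to\mathbf{\beta}_{LASSO}(H_0)$ even though $Q_K$ fails to be convex for large $K$ (the penalty $J_K$ has negative curvature for large arguments), and (b) control the rate $(\mathbf{\beta}_K(H_0))_j=O(1/K)$ and verify that the relevant stationary branch keeps $t_{j,K}$ away from $\pm t^\ast$ — this is precisely what makes $2\lambda J_K''$ dominate there and thereby produces the exact zeros of the lasso influence function. By comparison the active-coordinate limit and the block-matrix bookkeeping are routine.
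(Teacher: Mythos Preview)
Your proposal is correct and follows essentially the same route as the paper: apply Proposition~\ref{prop:gen} to $\mathbf{\beta}_K$, partition the matrix $\mathbb{E}_{H_0}[\mathbf{xx}']+\lambda\,\diag(J_K''(\mathbf{\beta}_K(H_0)))$ into active and inactive blocks, and use a Schur-complement computation to show that its inverse tends to $\diag\big((\mathbb{E}_{H_0}[\mathbf{x}_{1:k}\mathbf{x}_{1:k}'])^{-1},\mathbf{0}\big)$. If anything you are more careful than the paper, which simply writes $J_K''((\mathbf{\beta}_K(H_0))_j)=2K$ on the inactive block (tacitly evaluating at $\beta=0$) and replaces the active-block entries by $0$ in the final limit without ever discussing the convergence $\mathbf{\beta}_K(H_0)\to\mathbf{\beta}_{LASSO}(H_0)$ or the issues you flag as obstacles (a)--(b).
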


%% SPECIAL CASE -  SPARSE LTS

\section{The Influence Function of sparse LTS}
\label{sec:IF_spLTS}

For sparse LTS, computation of the influence function is more difficult than for the lasso. In addition to the nondifferentiable penalty function, sparse LTS also has a discontinuous loss function. For simplicity, we therefore assume a univariate normal distribution for the predictor $x$ and the error $e$. However, the below presented ideas can be used to derive the influence function also for other distributions (similar as stated below Lemma \ref{lemma:spLTS_uni}). Results for Cauchy distributed predictors and errors are available from the first author upon request.

\begin{lemma}
\label{lemma:IFspLTS}
Let $y=x\beta_0+e$ be a simple regression model as in (\ref{eq:ModelF}). If $x$ and $e$ are normally distributed, the influence function of the sparse LTS functional (\ref{eq:spLTS_uni}) is 
\begin{align}
IF(&(x_0, y_0), \beta_{spLTS},H_0) =\begin{cases}&\hspace{-0.3cm}0 \hspace{4.5cm} \text{ if } -\frac{\alpha \lambda}{2c_1\mathbb{E}_{H_0}[x^2]} < \beta_0 \leq \frac{\alpha \lambda}{2c_1\mathbb{E}_{H_0}[x^2]},\\
& \hspace{-0.3cm}(\beta_{spLTS}(H_0) - \beta_0) - \frac{q_\alpha^2(I_{[|r_0|\leq q_\alpha] }-\alpha)(\beta_0-\beta_{spLTS}(H_0))}{\alpha - 2 q_\alpha\phi(q_\alpha)} + \\
& \hspace{-0.3cm}\hspace{3.5cm}+\frac{x_0(y_0-x_0\beta_{spLTS}(H_0))I_{[|r_0|\leq q_\alpha] }}{(\alpha - 2 q_\alpha\phi(q_\alpha))\mathbb{E}_{H_0}[x^2]}  \text{ otherwise}
\end{cases}
\label{eq:IFspLTS}
\end{align}
with $r_0 = \frac{y_0 - x_0\beta_{spLTS}(H_0)}{\sqrt{\sigma^2+(\beta_0-\beta_{spLTS}(H_0))^2\mathbb{E}_{H_0}[x^2]}}$ and the same notation as in Lemma \ref{lemma:spLTS_uni}. 
\end{lemma}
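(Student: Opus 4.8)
The plan is to characterise $\beta_{spLTS}(H)$ by a stationarity equation coupled with the defining equation of the trimming quantile, and then to differentiate both implicitly along the contamination path $H_\epsilon=(1-\epsilon)H_0+\epsilon\,\delta_{(x_0,y_0)}$.

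\emph{Step 1: first-order conditions.} For $\beta:=\beta_{spLTS}(H)\neq0$ I would first show that
$$\mathbb{E}_H\!\left[x\,(y-x\beta)\,I_{[|y-x\beta|\le q_\beta]}\right]=\tfrac{\alpha\lambda}{2}\,\sign(\beta),\qquad P_H\big(|y-x\beta|\le q_\beta\big)=\alpha .$$
The first identity rests on the observation that, although the trimming region moves with $\beta$, its contribution to the derivative of the trimmed loss vanishes: on the boundary the squared residual equals the constant $q_\beta^2$, so that contribution is $q_\beta^2\,\partial_\beta P_H(|y-x\beta|\le q_\beta)=q_\beta^2\,\partial_\beta\alpha=0$. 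In the parameter range of the first case, Lemma~\ref{lemma:spLTS_uni} gives $\beta_{spLTS}(H_0)=0$ and the associated subgradient inequality holds strictly; this is stable under small perturbations, so $\beta_{spLTS}(H_\epsilon)\equiv0$ for small $\epsilon$ and the influence function vanishes there. The rest of the argument treats the second case, in which $\sign(\beta_{spLTS}(H_\epsilon))$ is constant for $\epsilon$ near $0$.

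\emph{Step 2: implicit differentiation.} Writing $\beta_\epsilon:=\beta_{spLTS}(H_\epsilon)$ and $q_\epsilon:=q_{\beta_\epsilon}(H_\epsilon)$, I would expand $\mathbb{E}_{H_\epsilon}=(1-\epsilon)\mathbb{E}_{H_0}+\epsilon\,\delta_{(x_0,y_0)}$ in both equations of Step~1 and differentiate at $\epsilon=0$. This produces two linear equations in the unknowns $\dot\beta:=IF((x_0,y_0),\beta_{spLTS},H_0)$ and $\dot q:=\partial_\epsilon q_\epsilon|_{\epsilon=0}$: each $\epsilon$-derivative contributes a ``value at $(x_0,y_0)$ minus expectation'' term plus chain-rule terms in $\dot\beta$ and $\dot q$. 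Differentiating an indicator-weighted expectation $\mathbb{E}_{H_0}[g(x,y-x\beta)\,I_{[|y-x\beta|\le q]}]$ with respect to $\beta$ (through the residual $r=y-x\beta$) and $q$ yields, besides the smooth part, boundary terms supported on the level sets $\{r=\pm q\}$, i.e.\ integrals of $g$ against the density of $r$ at $\pm q$ (the ``$x(\delta(r-q)-\delta(r+q))$'' and ``$\delta(r-q)+\delta(r+q)$'' contributions). Using the stationarity equation at $H_0$ to identify the expectation of the score, the system takes the form (value at the outlier) $+$ (explicit coefficients)$\cdot(\dot\beta,\dot q)=0$.

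\emph{Step 3: evaluation under $H_0$ and the cancellation.} Here I would use that, with $d:=\beta_0-\beta_{spLTS}(H_0)$, the residual $r=y-x\beta_{spLTS}(H_0)=dx+e$ makes $(x,r)$ jointly normal with mean zero, $\Var(r)=\tau^2:=\sigma^2+d^2\,\mathbb{E}_{H_0}[x^2]$, $\mathbb{E}[x\mid r]=\frac{d\,\mathbb{E}_{H_0}[x^2]}{\tau^2}\,r$, $\Var(x\mid r)=\sigma^2\mathbb{E}_{H_0}[x^2]/\tau^2$, while $q_{\beta_{spLTS}(H_0)}=q_\alpha\tau$ and the density of $r$ at $\pm q_\alpha\tau$ equals $\phi(q_\alpha)/\tau$. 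The elementary identities $\int_{-q_\alpha}^{q_\alpha}\phi=\alpha$ and $\int_{-q_\alpha}^{q_\alpha}z^2\phi(z)\,dz=\alpha-2q_\alpha\phi(q_\alpha)=c_1$ then turn every coefficient into an explicit function of $c_1,q_\alpha,\tau^2,d$ and $\mathbb{E}_{H_0}[x^2]$. Eliminating $\dot q$ between the two equations, the coefficient multiplying $\dot\beta$ collapses — all terms proportional to $q_\alpha^3\phi(q_\alpha)$ cancel — to exactly $-c_1\,\mathbb{E}_{H_0}[x^2]$, and solving reproduces the three terms of (\ref{eq:IFspLTS}), with $r_0$ emerging as the standardised contamination residual $(y_0-x_0\beta_{spLTS}(H_0))/\tau$, so that $I_{[|r_0|\le q_\alpha]}$ is the indicator that the outlier survives the trimming.

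The step I expect to be the main obstacle is not any single computation but the rigorous handling of the coupled implicit differentiation across the non-smooth features of the problem: justifying that $\epsilon\mapsto(\beta_\epsilon,q_\epsilon)$ is differentiable at $0$, that the moving-boundary contributions are exactly the stated boundary integrals (which relies on the joint density of $(x,r)$ being regular at the trimming boundary, guaranteed here by normality), and that differentiation and expectation may be interchanged. A convenient consistency check along the way is $\mathbb{E}_{H_0}[IF]=0$: the middle term integrates to $0$ because $\mathbb{E}_{H_0}[I_{[|r_0|\le q_\alpha]}]=\alpha$, and the last term integrates to $d$ by the stationarity equation at $H_0$, cancelling the constant $-d=\beta_{spLTS}(H_0)-\beta_0$.
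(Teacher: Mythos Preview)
Your proposal is correct and follows essentially the same route as the paper: both arguments set up the first-order condition of the trimmed objective together with the defining equation of the trimming quantile, differentiate implicitly along the contamination path $H_\epsilon$, and evaluate all coefficients using the fact that under $H_0$ the residual $y-x\beta$ is normal with variance $\sigma^2(\beta)$. The only organizational difference is that you treat $(\beta_\epsilon,q_\epsilon)$ as a coupled $2\times 2$ system from the outset and eliminate $\dot q$ at the end, whereas the paper keeps the FOC as a single equation $\Psi(\epsilon,\beta)=0$ (with $q_{\epsilon,\beta}$ defined implicitly inside it), computes $\partial_\epsilon q$, $\partial_\beta q$ and $\partial_\epsilon\partial_\beta q$ separately from the quantile identity, and substitutes them into $\Psi_1$ and $\Psi_2$ via the Leibniz rule; the two bookkeeping schemes yield the same linear relations.
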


Lemma \ref{lemma:IFspLTS} shows that the influence function of the sparse LTS functional may become unbounded for points $(x_0, y_0)$ that follow the model, i.e. for good leverage points, but remains bounded elsewhere, in particular for bad leverage points and vertical outliers. This shows the good robust properties of sparse LTS.

We can also see from Equation (\ref{eq:IFspLTS}) that the influence function of the sparse LTS functional is zero if the functional is shrunken to zero, i.e. if $|\beta_0| \leq \frac{\alpha \lambda}{2c_1\mathbb{E}_{H_0}[x^2]}$. This result is the same as for the lasso functional (see Proposition \ref{prop:lasso_multi}). It implies that infinitesimal amounts of contamination do not affect the functional, when the latter is shrunken to zero.

%% PLOTS OF INFLUENCE FUNCTIONS

\section{Plots of Influence Functions}
\label{sec:Plot_IF}

We first compare the effects of different penalties and take a quadratic loss function. We consider least squares, ridge and lasso regression as well as the SCAD penalty (\ref{eq:SCAD}). To compute ridge and lasso regression a value for the penalty parameter $\lambda$ is needed, and for SCAD another additional parameter $a$ has to be specified. We choose a fixed value $\lambda = 0.1$ and, as proposed by \cite{Fan}, we use $a=3.7$. 

Influence functions can only be plotted for simple regression $y=x\beta_0+e$, i.e. for $p=1$. We specify the predictor and the error as independent and standard normally distributed. For the parameter $\beta_0$ we use a parameter $\beta_0 = 1.5$ that will not be shrunk to zero by any of the functionals, as well as $\beta_0=0$ to focus also on the sparseness of the functionals. Figures~\ref{fig:IF_penalty_beta15} and \ref{fig:IF_penalty_beta0} show the plots of the influence functions for least squares, ridge, lasso and SCAD for both values of $\beta_0$. Examining Figure~\ref{fig:IF_penalty_beta15}, one could believe that all influence functions are equal. The same applies for the influence functions of least squares and ridge in Figure~\ref{fig:IF_penalty_beta0}. However, this is not the case. All influence functions are different of one another because their bias and the second derivative of the penalty appear in the expression of the influence function. Those terms are different for the different functionals. Usually, the differences are minor. Note, however, that for some specific choices of $\lambda$ and $\beta_0$ differences can be substantial. For $\beta_0 = 0$, see Figure~\ref{fig:IF_penalty_beta0}, SCAD and lasso produce a constantly zero influence function. We may conclude that in most cases the effect of the penalty function on the shape of the influence function is minor.

\begin{figure}
\vspace{-0.8cm}
\begin{center}
	\includegraphics[width = 0.88\textwidth]{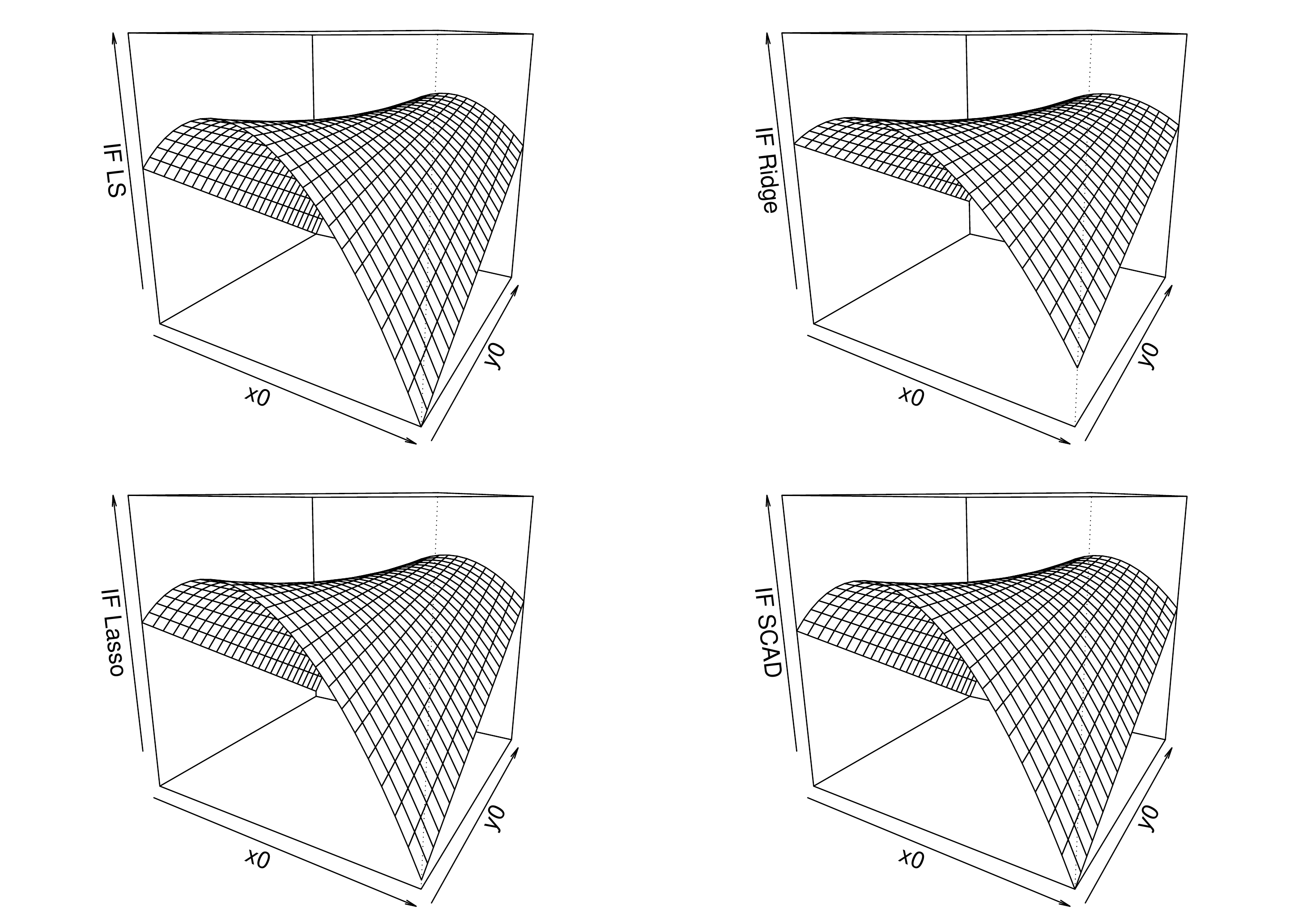}
\caption{Influence functions for different penalty functions (least squares, ridge, lasso and SCAD) for $\beta_0 = 1.5$ with $(x_0, y_0)\in[-10,10]^2$ and the vertical axis ranging from $-250$ to $100$}
\label{fig:IF_penalty_beta15}
\centering
	\includegraphics[width = 0.88\textwidth]{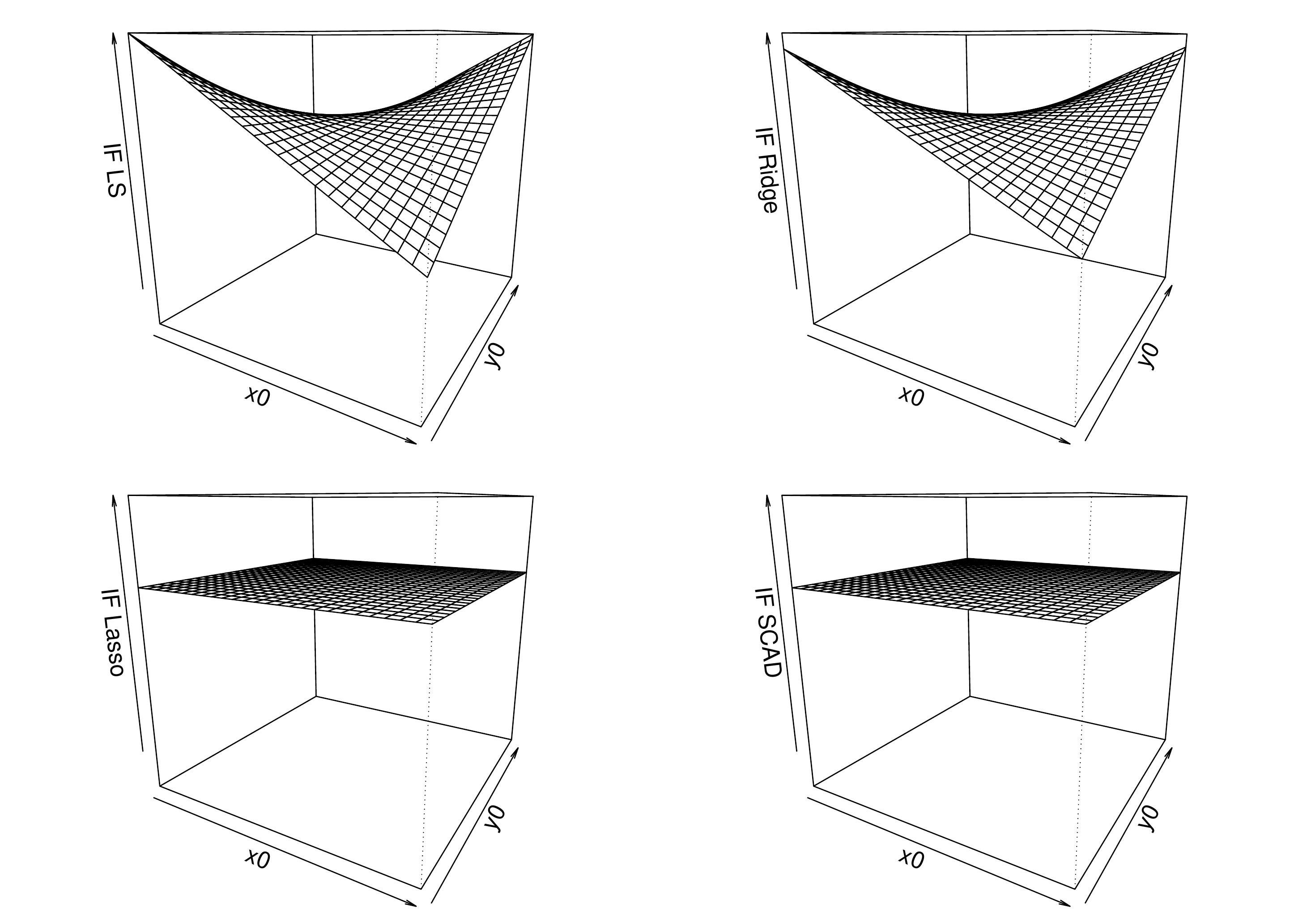}
\caption{Influence functions for different penalty functions (least squares, ridge, lasso and SCAD) for $\beta_0 = 0$ with $(x_0, y_0)\in[-10,10]^2$ and the vertical axis ranging from $-250$ to $100$}
\label{fig:IF_penalty_beta0}
\end{center}
\end{figure}

To compare different loss functions, we use Huber loss (\ref{eq:Huber}), biweight loss (\ref{eq:biweight}) and sparse LTS (\ref{eq:spLTS}), each time combined with the $L_1$-penalty $J(\beta)=|\beta|$ to achieve sparseness. For the simple regression model $y=x\beta_0+e$, we specify the predictor and the error as independent and standard normally distributed and consider $\beta_0=0$ and $\beta_0=1.5$. Furthermore, we fix $\lambda = 0.04$. 

Figure~\ref{fig:IF_loss} shows the influence functions of these functionals with Huber and biweight loss function. They clearly differ from the ones using the classic quadratic loss for coefficients $\beta_0$ that are not shrunk to zero (compare to panels corresponding to the lasso in Figures~\ref{fig:IF_penalty_beta0} and \ref{fig:IF_penalty_beta15}). The major difference is that the influence functions of functionals with a bounded loss function (sparse LTS, biweight) are only unbounded for good leverage points and bounded for regression outliers. This indicates the robust behavior of the functionals. It is even further emphasized by the fact that those observations $(x_0,y_0)$ with big influence are the ones with small residuals $y_0 - x_0\beta_0$, that is the ones that closely follow the underlying model distribution. Observations with large residuals have small and constant influence. In contrast, the unbounded Huber loss function does not achieve robustness against all types of outliers. Only for outliers in the response the influence is constant (for a fixed value of $x_0$). However, if the predictor values increase, the influence of the corresponding observation increases linearly. For a quadratic loss function the increase would be quadratic. Thus, a Huber loss reduces the influence of bad leverage points, but does not bound it. For $\beta(H_0)=0$ and for all loss functions, the $L_1$-penalized functionals produce a constantly zero influence function, thus, creating sparseness also under small perturbation from the model. To sum up, a Huber loss function performs better than a quadratic loss, but both cannot bound the influence of bad leverage points. Only sparse LTS and the penalized M-functional with biweight loss are very robust. They are able to bound the impact of observations that lie far away from the model, while observations that closely follow the model get a very high influence.

We simulate the expected values that appear in the influence function (\ref{eq:IFgen}) by Monte Carlo simulation (using $10^5$ replications). Furthermore, Proposition \ref{prop:gen} can actually not be applied as the lasso penalty is not differentiable. However, using either the $tanh$ approximation (\ref{eq:tanh_approx}) or the same approach as in the proof of Lemma \ref{prop:lasso_multi}, one can show that the influence function of these functionals equals zero in case the functional equals zero and (\ref{eq:IFgen}) otherwise.

\begin{figure}
\begin{center}
	\includegraphics[width = \textwidth]{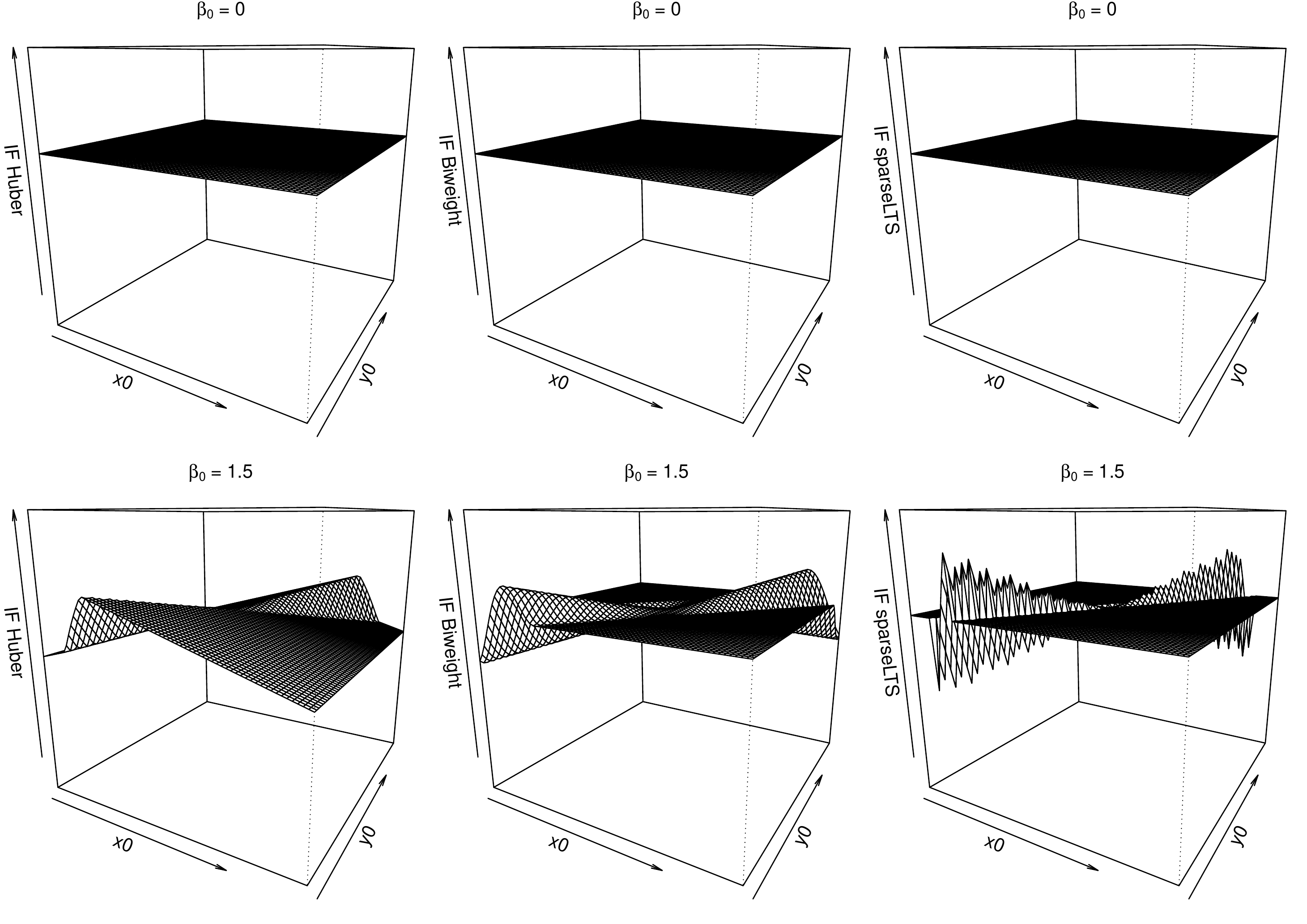}
\caption{Influence functions for different loss functions (Huber, biweight, sparse LTS) and $L_1$-penalty for $\beta_0 = 0$ and $\beta_0 = 1.5$ with $(x_0, y_0)\in[-10,10]^2$ and the vertical axis ranging from $-75$ to $40$}
\label{fig:IF_loss}
\end{center}
\end{figure}

%% SENSITIVITY CURVES

\section{Sensitivity Curves}
\label{sec:NumExp}

To study the robustness of the different penalized M-estimators from Section \ref{sec:Plot_IF} at sample level, we compute sensitivity curves \citep[][]{Maronna}, an empirical version of the influence function. For an estimator $\mathbf{\hat{\beta}}$ and at sample $(X,\mathbf{y})$, it is defined as 
\begin{align*}
SC(\mathbf{x}_0,y_0,\mathbf{\hat{\beta}}) = \frac{\mathbf{\hat{\beta}}(X\cup\{\mathbf{x}_0\}, \mathbf{y}\cup \{y_0\}) - \mathbf{\hat{\beta}}(X, \mathbf{y})}{\frac{1}{n+1}}.
\end{align*}

To compute the penalized estimators, we use the coordinate descent algorithm. As a starting value, we use the least squares estimate for estimators using a quadratic loss, and the robust sparse LTS-estimate for the others. Sparse LTS can be easily and fast computed using the \texttt{sparseLTS} function of the \textsf{R} package \texttt{robustHD}. Furthermore, we divide the argument of the $\rho$-function in (\ref{eq:betaM_data}) by a preliminary scale estimate. For simplicity we use the MAD of the residuals of the initial estimator used in the coordinate descent algorithm.

Figures~\ref{fig:SC_penalty_beta15} and \ref{fig:SC_penalty_beta0} show the sensitivity curves for estimators $\mathbf{\hat{\beta}}$ with quadratic loss function and the different penalties least squares, ridge, lasso and SCAD for parameters $\beta_0=1.5$ and $\beta_0=0$, respectively. We can compare these figures to the theoretical influence functions in Figures~\ref{fig:IF_penalty_beta15} and \ref{fig:IF_penalty_beta0}. Examining Figure~\ref{fig:SC_penalty_beta15}, we see that for $\beta_0=1.5$, the results match the theoretical ones. For $\beta_0 = 0$, see Figure~\ref{fig:SC_penalty_beta0}, the sensitivity curve is again comparable to the influence function. For the lasso and SCAD, small deviations from the constantly zero sensitivity curve can be spotted in the left and right corner. This indicates that the number of observations $n$ is too small to get the same results as at population level for observations $(x_0, y_0)$ that lie far away from the model.

\begin{figure}
\vspace{-0.8cm}
\begin{center}
	\includegraphics[width = 0.88\textwidth]{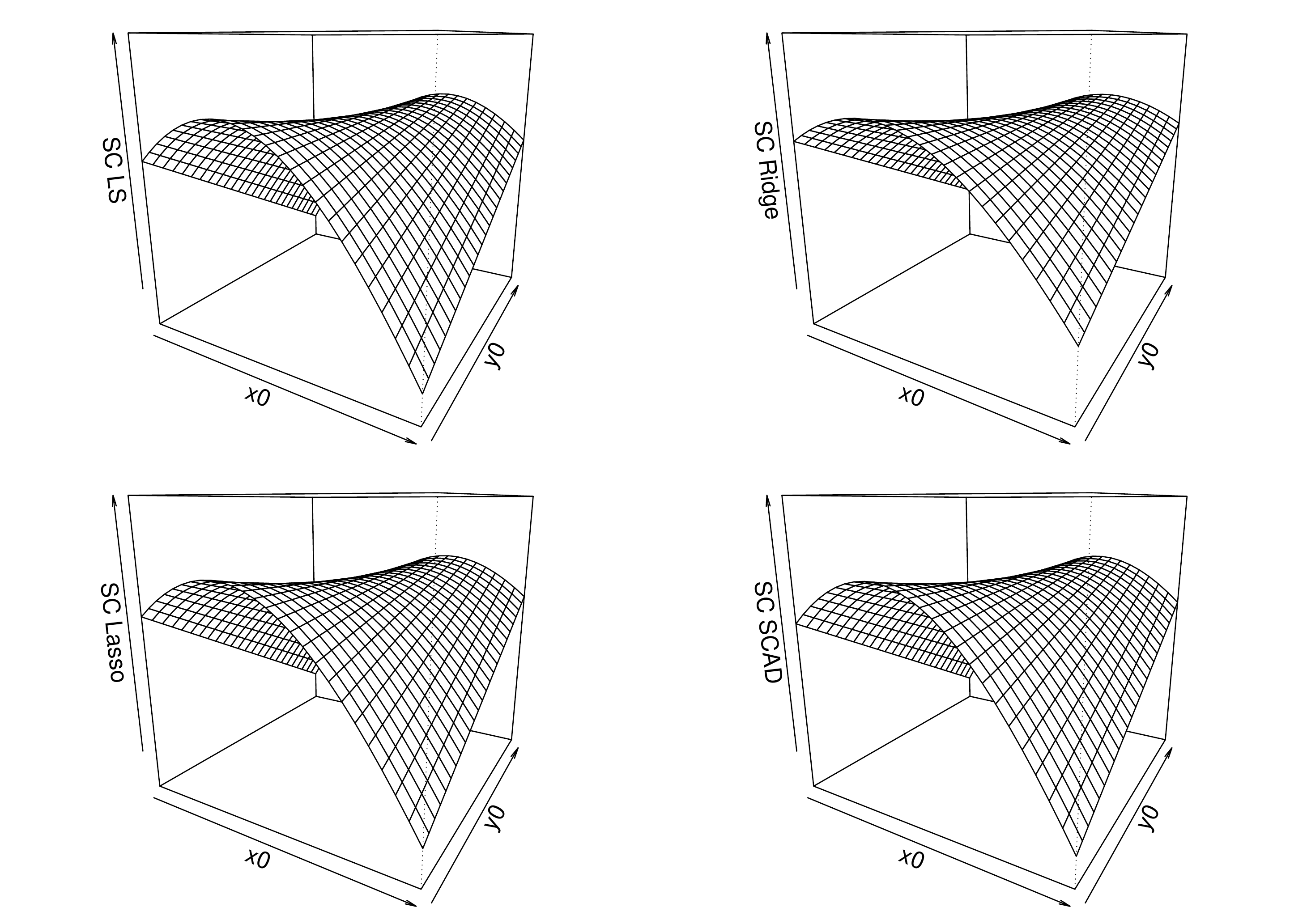}
\caption{Sensitivity curve for different penalty functions (least squares, ridge, lasso and SCAD) for $\beta_0 = 1.5$ with $(x_0, y_0)\in[-10,10]^2$ and the vertical axis ranging from $-250$ to $100$}
\label{fig:SC_penalty_beta15}
\centering
	\includegraphics[width = 0.88\textwidth]{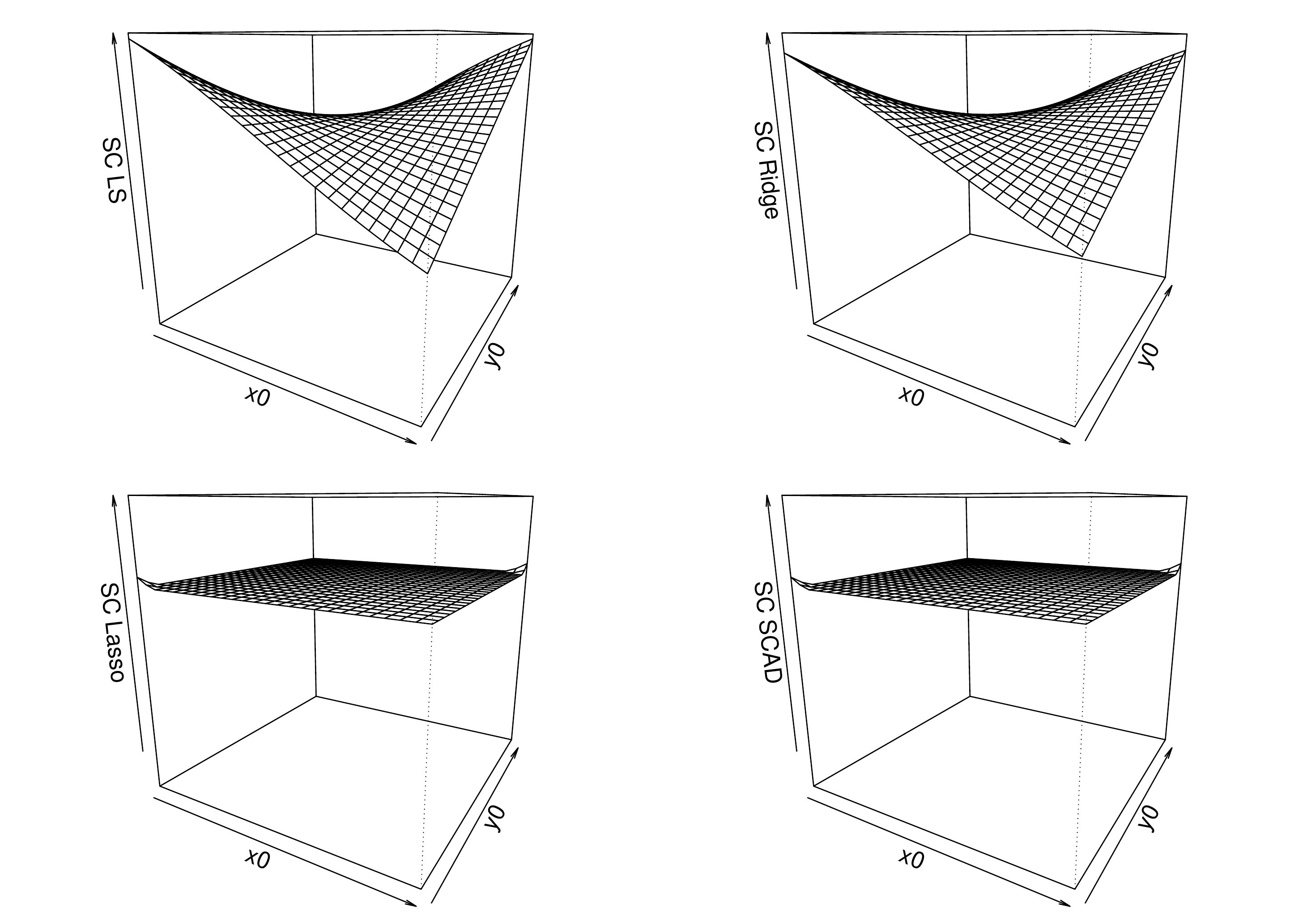}
\caption{Sensitivity curve for different penalty functions (least squares, ridge, lasso and SCAD) for $\beta_0 = 0$ with $(x_0, y_0)\in[-10,10]^2$ and the vertical axis ranging from $-250$ to $100$}
\label{fig:SC_penalty_beta0}
\end{center}
\end{figure}

We also compare the results for estimators using different loss functions. Therefore we look at sparse LTS and the $L_1$-penalized Huber- and biweight-M-estimators, as in Section \ref{sec:Plot_IF}. Their sensitivity curves are plotted in Figure~\ref{fig:SC_loss}. They resemble the shape of the influence functions in Figure~\ref{fig:IF_loss}.

\begin{figure}
\begin{center}
	\includegraphics[width = \textwidth]{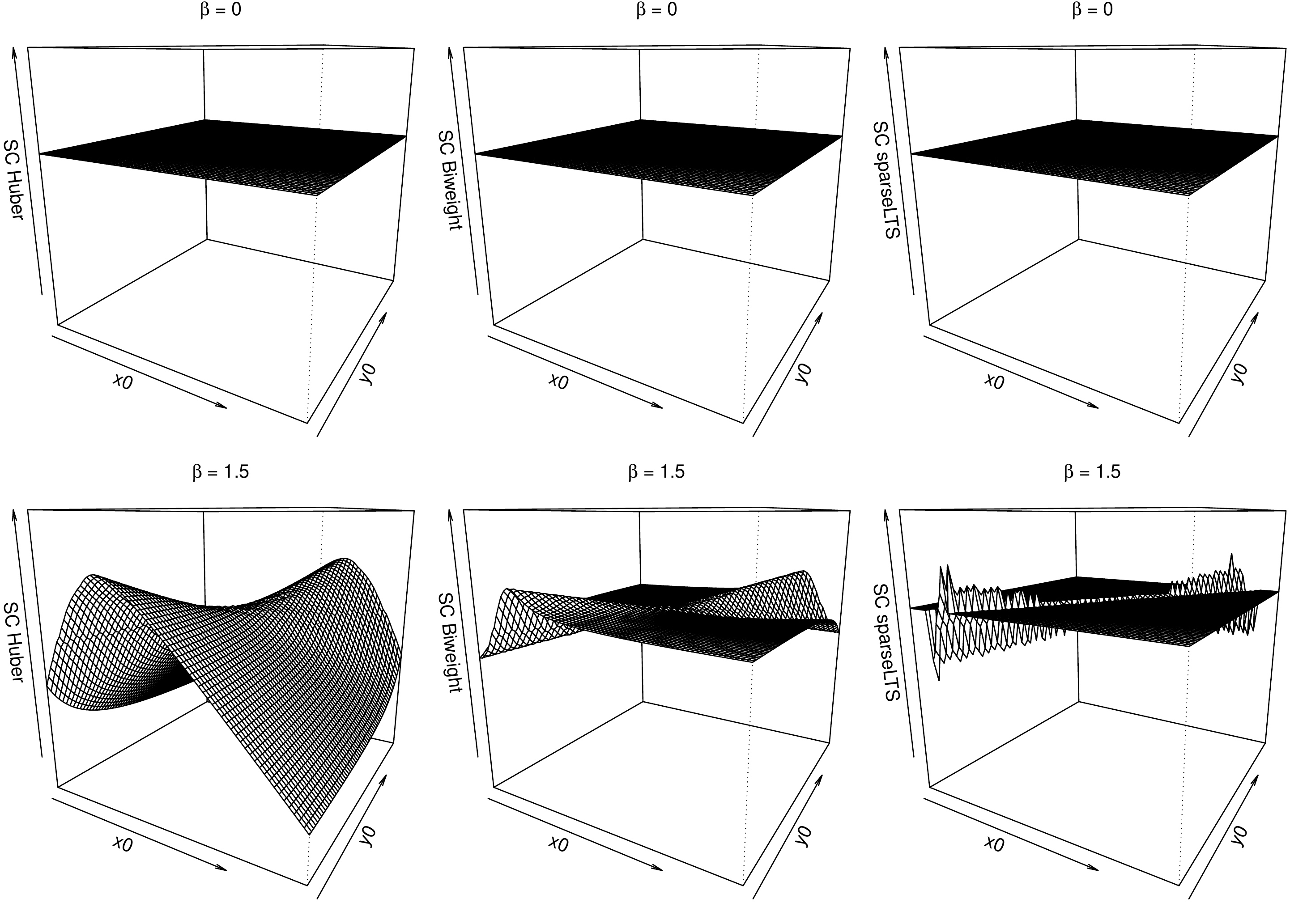}
\caption{Sensitivity curve for different loss functions (Huber, biweight, sparse LTS) and $L_1$-penalty for $\beta_0 = 0$ and $\beta_0 = 1.5$ with $(x_0, y_0)\in[-10,10]^2$ and the vertical axis ranging from $-75$ to $40$}
\label{fig:SC_loss}
\end{center}
\end{figure}	

To conclude, we may say that the sensitivity curves match the corresponding influence functions.

%% ASV AND MSE

\section{Asymptotic Variance and Mean Squared Error}
\label{sec:ASVMSE}

We can also evaluate the performance of any functional $T$ by the asymptotic variance, given by
\begin{align*}
ASV(T, H) = n\cdot \lim_{n\rightarrow\infty}\Var{\,T_n},
\end{align*}
where the estimator $T_n$ is the functional $T$ evaluated at the empirical distribution. A heuristic formula to compute the asymptotic variance is given by
\begin{align}
ASV(T, H) = \int IF((\mathbf{x}_0,y_0), T, H) \cdot IF((\mathbf{x}_0,y_0), T, H)' \, dH((\mathbf{x}_0,y_0)).
\label{eq:ASV}
\end{align}
For M-functionals with a smooth loss function $\rho$ and smooth penalty $J$, the theory of M-estimators is applicable \citep[e.g.][]{Huber, Hayashi}. For the sparse LTS-estimator, a formal proof of the validity of (\ref{eq:ASV}) is more difficult and we only conjecture its validity. For the unpenalized case a proof can be found in \citep{Hossjer}.

Using formulas of Sections \ref{sec:IF} - \ref{sec:IF_spLTS}, the computation of the integral (\ref{eq:ASV}) is possible using Monte Carlo numerical integration. We present results for simple regression.

Figure~\ref{fig:ASV_uni} shows the asymptotic variance of six different functionals (least squares, lasso, ridge, biweight loss with $L_1$-penalty, Huber loss with $L_1$-penalty, sparse LTS) as a function of $\lambda$ for $\beta_0 = 1.5$. As the asymptotic variance of least squares is constantly one for any value $\lambda$ and $\beta_0$, it is used as a reference point in all four panels. All sparse functionals show a jump to zero in their asymptotic variance after having increased quickly to their maximum. This is due to parameters estimated exactly zero, for values of $\lambda$ sufficiently large. In the left upper panel, the asymptotic variance of ridge is added. It is smaller than the asymptotic variance of least squares and decreases monotonously to zero. Generally, for the optimal $\lambda$, least squares has high asymptotic variance, ridge a reduced one. The smallest asymptotic variance can be achieved by the sparse functionals. But they can also get considerably high values for bad choices of $\lambda$. We omit the plots for $\beta_0=0$ because the asymptotic variance of ridge behaves similarly as in Figure~\ref{fig:ASV_uni} and the asymptotic variance of the other, sparse functionals is constantly zero.

\begin{figure}
\begin{center}
	\includegraphics[width = \textwidth]{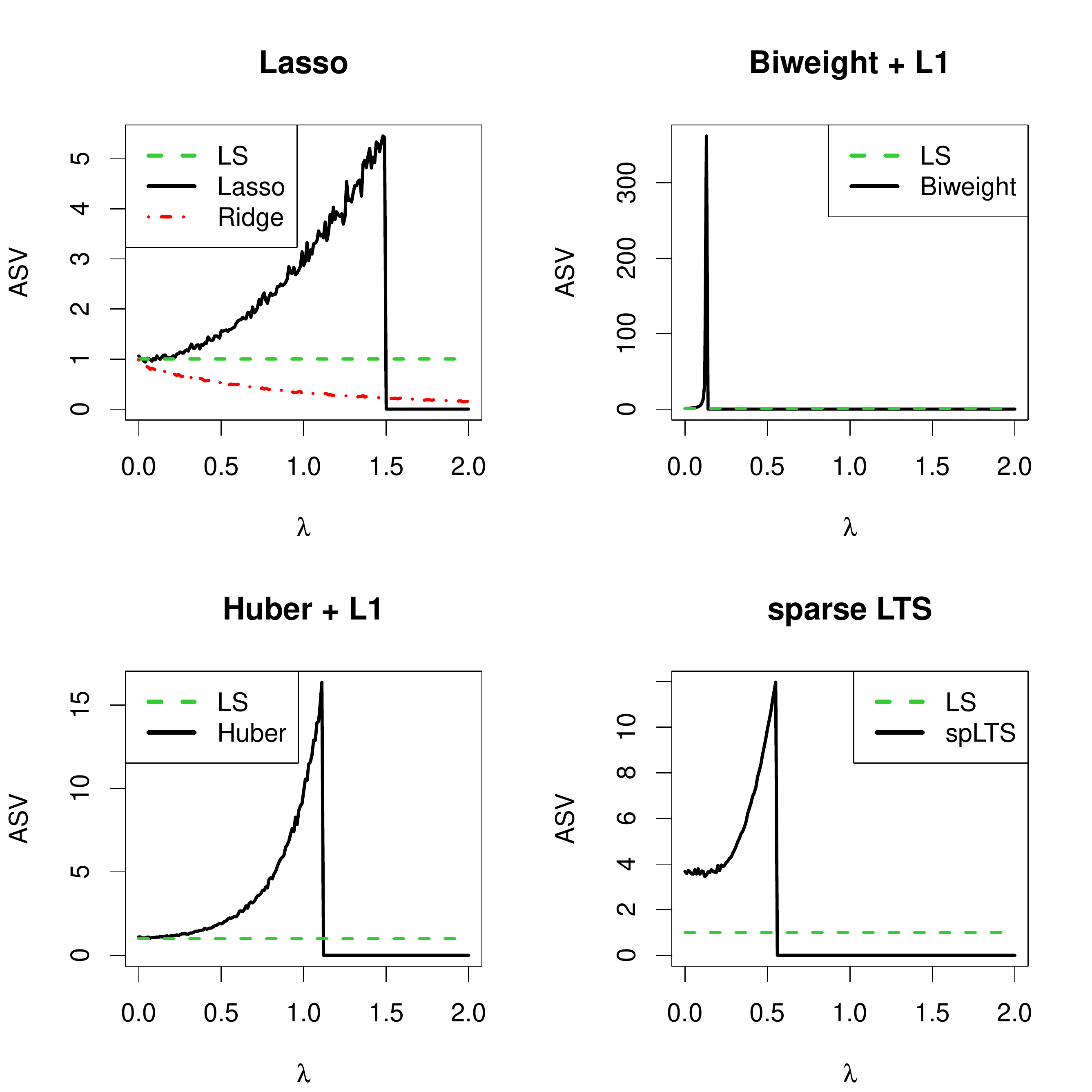}
\caption{Asymptotic variance of various functionals for $\beta_0=1.5$}
\label{fig:ASV_uni}
\end{center}
\end{figure}

In general, robust functionals have a bias (see Section \ref{sec:Bias}). Hence, considering only the asymptotic variance is not sufficient to evaluate the precision of functionals. A more informative measure is the Mean Squared Error (MSE) as it takes bias and variance into account
\begin{align}
MSE(T, H) = \frac{1}{n} ASV(T, H) + \Bias(T, H) \Bias(T, H)'.
\label{eq:MSE}
\end{align}
Figure~\ref{fig:MSE_uni} displays $\MSE$ as a function of $n$ for $\beta_0=0.05$ and $1.5$, $\lambda = 0.1$ is fixed. We only present results for simple regression as they resemble the component-wise results in multiple regression.

Looking at Figure~\ref{fig:MSE_uni}, the MSE of least squares is the same in both panels as least squares has no bias and its asymptotic variance does not depend on $\beta_0$. It decreases monotonously from one to zero. The MSEs of the other functionals are also monotonously decreasing, but towards their bias. For $\beta_0 = 0.05$, MSE of ridge is slightly lower than that of least squares. The MSEs of the sparse functionals are constant and equal to their squared bias (i.e. $\beta_0^2$ as the estimate equals zero). For $\beta_0=1.5$, MSE of biweight is largest, MSE of sparse LTS is slightly larger than ridge and MSE of the lasso and Huber is similar to least squares, which is the lowest. We again do not show results for $\beta_0=0$ because then no functional has a bias, and we would only compare the asymptotic variances.

\begin{figure}
\begin{center}
	\includegraphics[width = 0.9\textwidth]{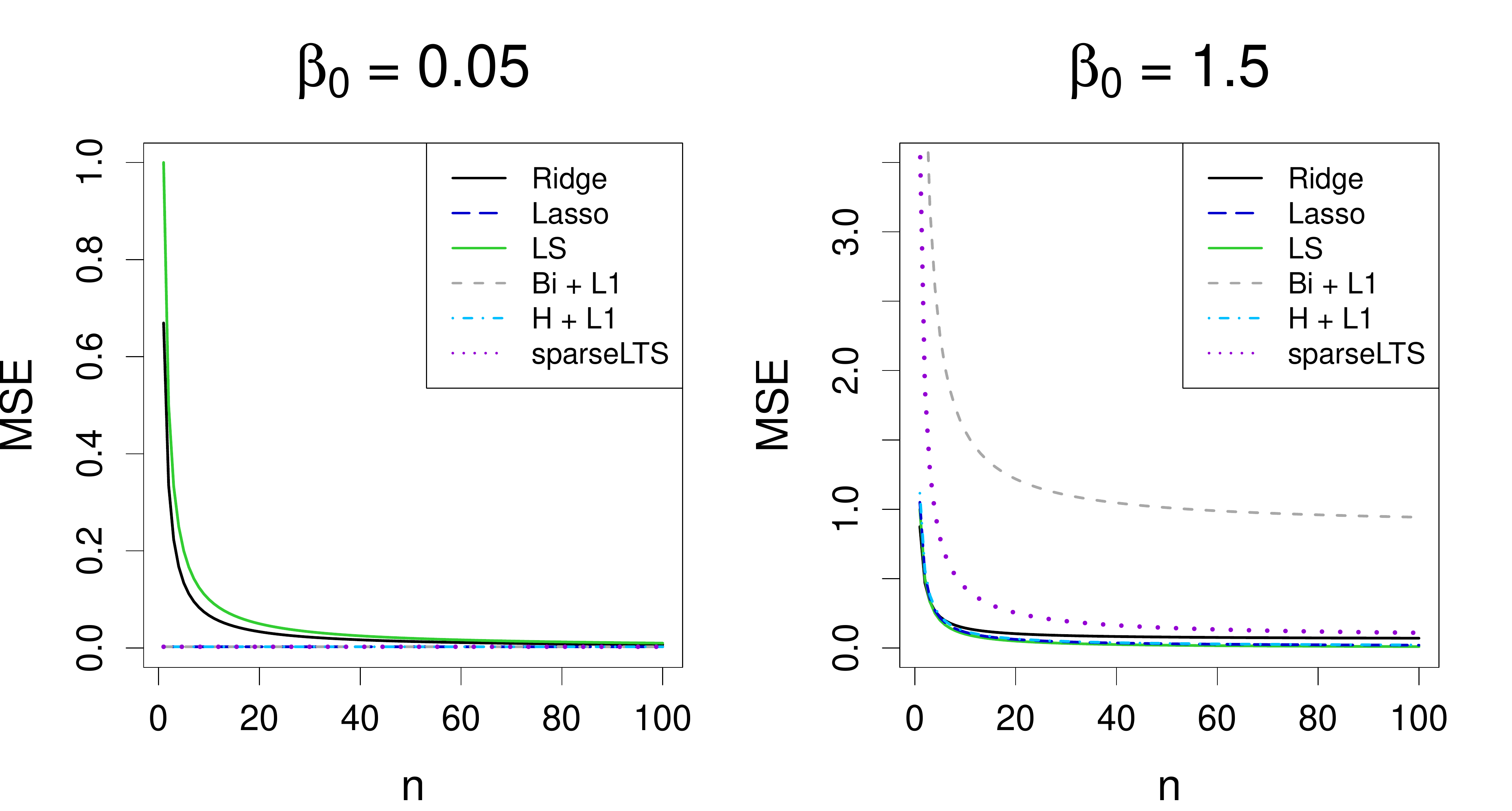}
\caption{Mean squared error of various functionals ($\lambda = 0.1$ fixed)}
\label{fig:MSE_uni}
\end{center}
\end{figure}	

We also show the match at population and sample level for the MSE. For any estimator $\hat{\beta}$ computed for $r=1,\ldots,R$ samples, an estimator for the mean squared error (\ref{eq:MSE}) is 
\begin{align*}
\widehat{MSE}(\hat{\beta}) = \frac{1}{R}\sum_{r=1}^R (\hat{\beta}_r - \beta_0)^2.
\end{align*}
For the six functionals (least squares, ridge, lasso, biweight-M wih $L_1$-penalty, Huber-M with $L_1$-penalty and sparse LTS) used in this section, Figures~\ref{fig:MSE_comp2} and \ref{fig:MSE_comp3} illustrate the good convergence of $n\cdot\widehat{MSE}(\hat{\beta})$ to $n\cdot MSE(\beta_0, H_0)$ for $\beta_0 = 0.05$ and $1.5$, respectively.

\begin{figure}
\begin{center}
	\includegraphics[width = \textwidth]{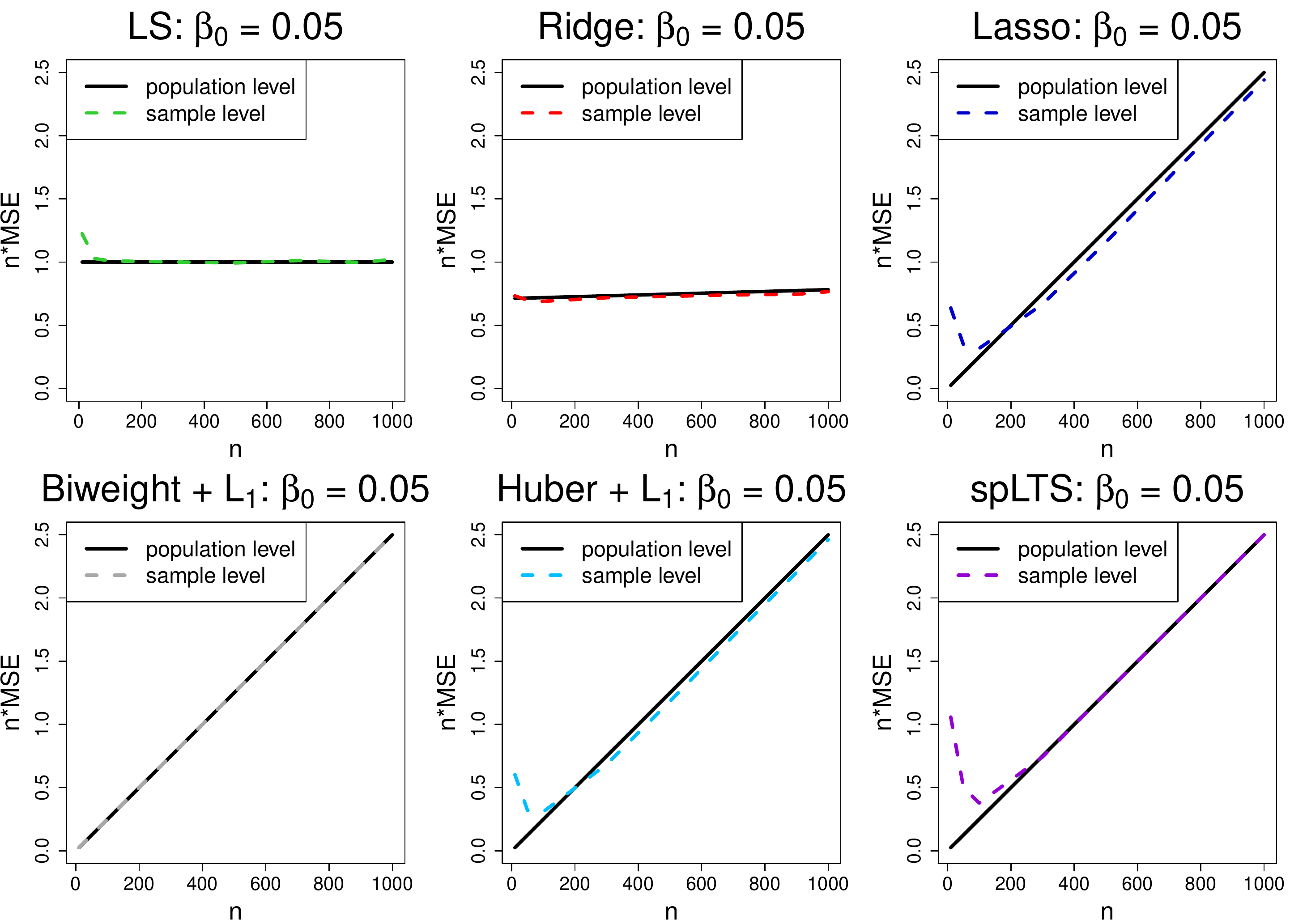}
\caption{Convergence of $\widehat{MSE}(\hat{\beta})$ to $MSE(\beta_0, H_0)$ for different functionals with $\beta_0 = 0.05$}
\label{fig:MSE_comp2}
\end{center}
\end{figure}	

\begin{figure}
\begin{center}
	\includegraphics[width = \textwidth]{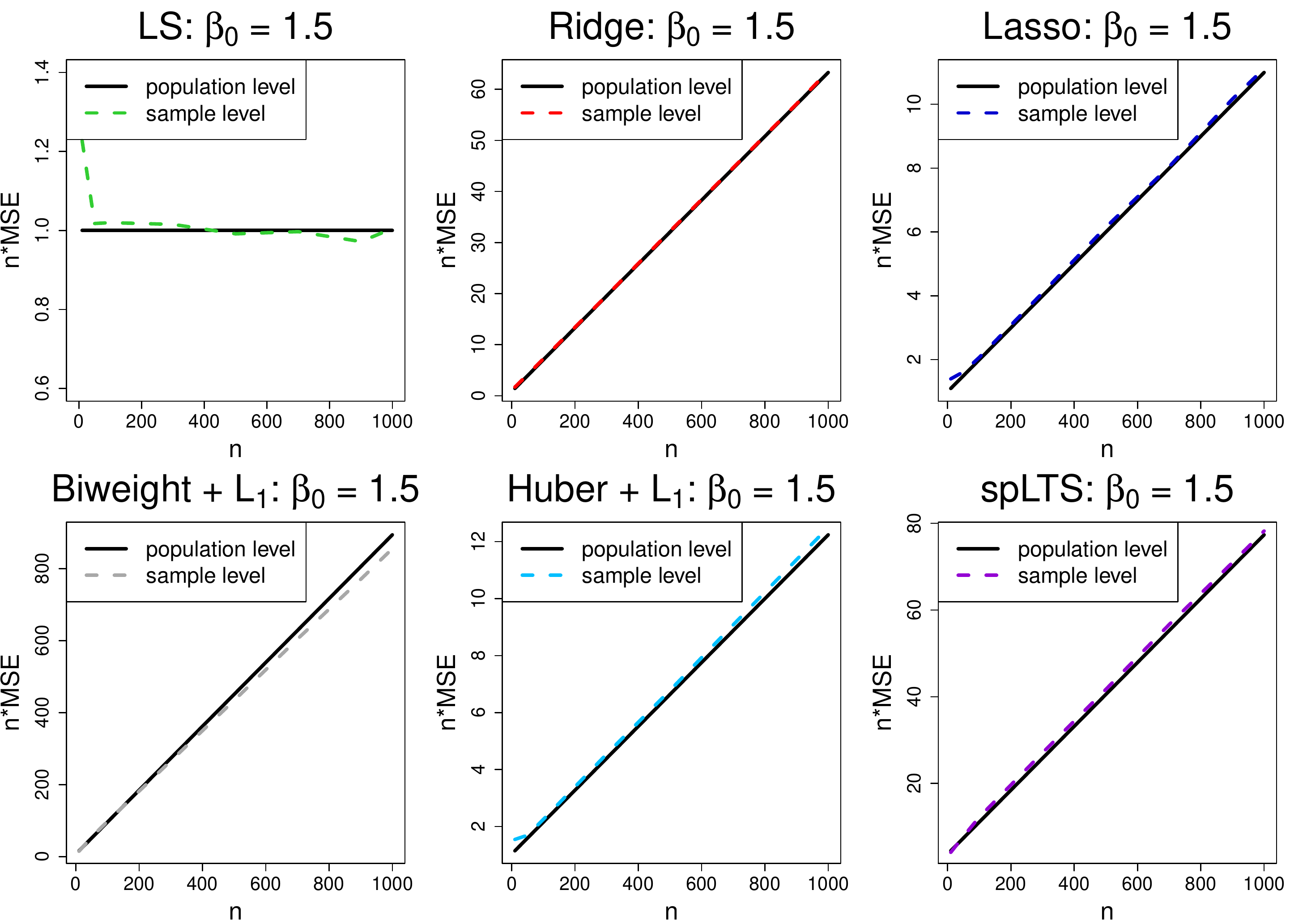}
\caption{Convergence of $\widehat{MSE}(\hat{\beta})$ to $MSE(\beta_0, H_0)$ for different functionals with $\beta_0 = 1.5$}
\label{fig:MSE_comp3}
\end{center}
\end{figure}

%% CONCLUSION

\section{Conclusion}
\label{sec:Conc}

In this paper we computed influence functions of  penalized regression estimators, more precisely for penalized M-functionals. From the derivation of the influence function, we concluded that only functionals with a bounded loss function (biweight, sparse LTS) achieve robustness against leverage points, while a Huber loss can deal with vertical outliers. Looking at the MSE, sparse LTS is preferred in case of bad leverage points and the $L_1$-penalized Huber M-estimator in case there are only vertical outliers. 

Apart from considering the influence function, a suitable estimator is often also chosen with respect to its breakdown point \citep[see for example][]{Maronna}. This second important property in robust analysis gives the maximum fraction of outliers that a method can deal with. While it has already been computed for sparse LTS \citep[][]{Alfons}, it would also be worth deriving it for the other robust penalized M-functionals mentioned in this paper.

As any study, also this one is subject to some limitations. First of all, we assumed in our derivations the penalty parameter $\lambda$ to be fixed. However, in practice it is often chosen with a data-driven approach. Thus, contamination in the data might also have an effect on the estimation through the choice of the penalty parameter. Investigation of this effect is left for further research.

Another limitation is that the values of the tuning constants in the loss functions of the M-estimators were selected to achieve a given efficiency in the non penalized case. One could imagine to select the $\lambda$ parameter simultaneously with the other tuning constants. 

Finally, in the theoretical derivations (but not at the sample level) we implicitly assume the scale of the error terms to be fixed, in order to keep the calculations feasible.  While the results obtained for the lasso, the ridge and the sparse LTS functional do not rely on that assumption, the results for biweight and Huber loss do.

%% APPENDIX

\newpage
\appendix
\section*{APPENDIX - Proofs}
\begin{proof}[Proof of Equation \ref{eq:lassoFunct}]
Recall that we are in the case $p=1$. For any joint distribution $(x,y)\sim H$ with $\beta_{LASSO}(H)\neq 0$, minimizing the objective function in (\ref{eq:lassoMulti}) and solving the resulting first-order condition (FOC) for $\beta_{LASSO}(H)$ yields
\begin{align}
\beta_{LASSO}(H) = \beta_{LS}(H)-\frac{\lambda}{\mathbb{E}_H[x^2]}\sign(\beta_{LASSO}(H)).
\label{eq:lasso1}
\end{align}

We will now consider two different cases. First we consider the case that the lasso functional is not zero at distribution $H$. We will show that it then always has to have the same sign as the least squares functional $\beta_{LS}(H)$. We start with assuming $\sign(\beta_{LASSO}(H))\neq\sign(\beta_{LS}(H))$ and show that this will lead to a contradiction. In this case $\beta_{LS}(H)=0$ is not possible for the following reason. If $\beta_{LS}(H)=0$, then $\beta=0$ minimizes the residual sum of squares. Furthermore, the minimum of the penalty function is attained at $\beta=0$. Hence, $\beta=0$ would not only minimize the residual sum of squares, but also the penalized objective function, if $\beta_{LS}(H) = 0$. Hence, the lasso functional would also be zero, which we do not consider in this first case. Thus, take $\beta_{LS}(H)>0$. From our assumption it would follow that $\sign(\beta_{LASSO}(H))=-1$ (as $\beta_{LASSO}(H)=0$ is considered only in the next paragraph) and together with the FOC this would yield the contradiction $0 > \beta_{LASSO}(H)=\beta_{LS}(H)+\lambda/\mathbb{E}_H[x^2]>\beta_{LS}(H)>0$. Analogous for $\beta_{LS}(H)<0$. Hence, for $\beta_{LASSO}(H)\neq 0$ the sign of the lasso and the least squares functional are always equal.

Let's now consider the case where the lasso functional is zero at the distribution $H$. The FOC then makes use of the concept of subdifferentials \citep[][]{Bertsekas} and can be written as $|\beta_{LS}(H)|\leq\lambda/\mathbb{E}_H[x^2]$. On the other hand, if $|\beta_{LS}(H)|\leq\lambda/\mathbb{E}_H[x^2]$ assuming $\beta_{LASSO}(H)\neq 0$ leads to a contradiction since Equation (\ref{eq:lasso1}) would imply that $\sign(\beta_{LASSO}(H)) = - \sign(\beta_{LASSO}(H))$. Thus, the lasso functional equals zero if and only if $|\beta_{LS}(H)|\leq\lambda/\mathbb{E}_H[x^2]$.
Therefore the lasso functional for simple regression is (\ref{eq:lassoFunct}).
\end{proof}

\begin{proof}[Proof of Lemma \ref{lemma:spLTS_uni}]
As $x\sim\mathcal{N}(0,\Sigma)$ and $e\sim\mathcal{N}(0,\sigma^2)$ are independent, $y-x\beta$ is normally distributed $y-x\beta\sim\mathcal{N}(0, \sigma^2+(\beta_0-\beta)^2\Sigma)$ for any $\beta\in\mathbb{R}$. Defining $\sigma^2(\beta) := \sigma^2+(\beta_0-\beta)^2\Sigma)$ we find $q_\beta = \Phi^{-1}(\frac{\alpha + 1}{2})\sigma(\beta)$. We also introduce $q_\alpha = \Phi^{-1}(\frac{\alpha+1}{2})$. With this we can rewrite the expected value of the objective function (\ref{eq:spLTS})
\begin{align}
\mathbb{E}_{H_0}[(y- x\beta)^2 I_{[|y-x\beta|\leq q_\beta]}] &= \sigma^2(\beta) \mathbb{E}_{H_0}[\frac{(y- x\beta)^2}{\sigma^2(\beta)} I_{[\frac{|y- x\beta|}{\sigma(\beta)}\leq q_\alpha]}]\notag\\
&=\sigma^2(\beta) \mathbb{E}_Z[Z^2 I_{[|Z|\leq q_\alpha]}]\qquad\text{ with } Z\sim\mathcal{N}(0,1)\notag\\
&=\sigma^2(\beta)(-2 q_\alpha \phi(q_\alpha) +\alpha).
\label{eq:spLTSaux2}
\end{align}
Denoting $c_1:=\alpha-2 q_\alpha \phi(q_\alpha)$, we can say that
\begin{align*}
\beta_{spLTS}(H_0) = \argmin_{\beta\in\mathbb{R}} c_1 \sigma^2(\beta) + \alpha \lambda |\beta|.
\end{align*}
Separating into $\beta\geq0$ and $\beta\leq0$, differentiating w.r.t. $\beta$ and setting the result to $0$ gives Equation (\ref{eq:spLTS_uni}).
\end{proof}

\begin{proof}[Proof of Proposition \ref{prop:gen}]
The objective function (\ref{eq:betaM}) is minimized by solving the first-order condition (FOC), the derivative of the objective function set zero. At the contaminated model with distribution $H_\epsilon:=(1-\epsilon)H_0+\epsilon\,\delta_{(\mathbf{x}_0,y_0)}$ this yields
\begin{align*}
-\mathbb{E}_{H_\epsilon}[\psi(y-\mathbf{x}'\mathbf{\beta}_M(H_\epsilon))\mathbf{x}] + 2\lambda J'(\mathbf{\beta}_M(H_\epsilon))=0.
\end{align*}
Here $J'(\mathbf{\beta}_M(H_\epsilon))$ is used as an abbreviation for $(J'(\beta_1(H_\epsilon)),\ldots,J'(\beta_p(H_\epsilon)))'$  and $\delta_{(\mathbf{x}_0,y_0)}$ denotes the point mass distribution at $(\mathbf{x}_0,y_0)$.

Using the definition of the contaminated distribution $H_\epsilon$, the FOC becomes
\begin{align*}
-(1-\epsilon)\mathbb{E}_{H_0}[\psi(y-\mathbf{x}'\mathbf{\beta}_M(H_\epsilon))\mathbf{x}] - \epsilon \psi(y_0-\mathbf{x}_0'\mathbf{\beta}_M(H_\epsilon))\mathbf{x}_0+ 2\lambda J'(\mathbf{\beta}_M(H_\epsilon))=0.
\end{align*}
Derivation with respect to $\epsilon$  yields
\begin{align*}
&\mathbb{E}_{H_0}[\psi(y-\mathbf{x}'\mathbf{\beta}_M(H_\epsilon))\mathbf{x}]- (1-\epsilon)\mathbb{E}_{H_0}[\psi'(y-\mathbf{x}'\mathbf{\beta}_M(H_\epsilon))\mathbf{x}(-\mathbf{x}'\frac{\partial}{\partial\epsilon} \mathbf{\beta}_M(H_\epsilon))] \\
&\qquad - \psi(y_0-\mathbf{x}_0'\mathbf{\beta}_M(H_\epsilon))\mathbf{x}_0 - \epsilon \psi'(y_0-\mathbf{x}_0'\mathbf{\beta}_M(H_\epsilon))\mathbf{x}_0(-\mathbf{x}_0'\frac{\partial}{\partial\epsilon} \mathbf{\beta}_M(H_\epsilon))\\
&\qquad + 2\lambda \diag(J''(\mathbf{\beta}_M(H_\epsilon)))\frac{\partial}{\partial\epsilon}\mathbf{\beta}_M(H_\epsilon) = 0,
\end{align*}
where $\diag(J''(\mathbf{\beta}_M(H_\epsilon)))$ denotes the diagonal matrix with entries \\
$(J''((\beta_M(H_\epsilon))_1),\ldots,J''((\beta_M(H_\epsilon))_p))$ in the main diagonal.

Since $\frac{\partial}{\partial\epsilon}\big[\mathbf{\beta}_M(H_\epsilon)\big]\big|_{\epsilon=0} = IF((\mathbf{x}_0,y_0), \mathbf{\beta}_M, H_0)$, 
\begin{align}
&\mathbb{E}_{H_0}[\psi(y-\mathbf{x}'\mathbf{\beta}_M(H_0))\mathbf{x}] + \mathbb{E}_{H_0}[\psi'(y-\mathbf{x}'\mathbf{\beta}_M(H_0))\mathbf{x}\mathbf{x}']\cdot IF((\mathbf{x}_0,y_0),\mathbf{\beta}_M,H_0) \\
&\qquad- \psi(y_0-\mathbf{x}_0'\mathbf{\beta}_M(H_0))\mathbf{x}_0 + 2\lambda \diag(J''(\mathbf{\beta}_M(H_0))) \cdot IF((\mathbf{x}_0,y_0), \mathbf{\beta}_M, H_0) = 0,
\label{eq:betaMaux}
\end{align}
Solving (\ref{eq:betaMaux}) for $IF((\mathbf{x}_0,y_0), \mathbf{\beta}_M, H_0)$, gives Equation (\ref{eq:IFgen}).
\end{proof}

\begin{proof}[Proof of Lemma \ref{lemma:lasso_uni}]
Using the explicit definition of the lasso functional (\ref{eq:lassoFunct}), its influence function can be computed directly. Thus, we differentiate the functional at the contaminated model $H_\epsilon=(1-\epsilon)H_0+\epsilon\delta_{(x_0,y_0)}$ with respect to $\epsilon$ and take the limit of $\epsilon$ approaching $0$
\begin{align*}
IF(&(x_0,y_0), \beta_{LASSO},H_0)=\\
&=\frac{\partial}{\partial\epsilon}\left[\sign((1-\epsilon)\mathbb{E}_{H_0}[xy]+\epsilon x_0y_0)\left(\left|\frac{(1-\epsilon)\mathbb{E}_{H_0}[xy]+\epsilon x_0y_0}{(1-\epsilon)\mathbb{E}_{H_0}[x^2]+\epsilon x_0^2}\right|-\frac{\lambda}{(1-\epsilon)\mathbb{E}_{H_0}[x^2]+\epsilon x_0^2}\right)_+\right]\bigg|_{\epsilon=0}\\
&=\frac{\partial}{\partial\epsilon}\left[\sign((1-\epsilon)\mathbb{E}_{H_0}[xy]+\epsilon x_0y_0)\right]\bigg|_{\epsilon=0}\left(\left|\frac{\mathbb{E}_{H_0}[xy]}{\mathbb{E}_{H_0}[x^2]}\right|-\frac{\lambda}{\mathbb{E}_{H_0}[x^2]}\right)_{+}+\\
&\quad+\sign(\mathbb{E}_{H_0}[xy])\frac{\partial}{\partial\epsilon}\left[\left(\left|\frac{(1-\epsilon)\mathbb{E}_{H_0}[xy]+\epsilon x_0y_0}{(1-\epsilon)\mathbb{E}_{H_0}[x^2]+\epsilon x_0^2}\right|-\frac{\lambda}{(1-\epsilon)\mathbb{E}_{H_0}[x^2]+\epsilon x_0^2}\right)_+\right]\bigg|_{\epsilon=0}.
\end{align*}
While the derivative in the first summand equals zero almost everywhere, the derivative occurring in the second summand has to consider two cases separately. Using the fact that $\mathbb{E}_{H_0}[xy]/\mathbb{E}_{H_0}[x^2] = \beta_{LS}(H_0) = \beta_0$, we get
\begin{align*}
\frac{\partial}{\partial\epsilon}&\left[\left(\left|\frac{(1-\epsilon)\mathbb{E}_{H_0}[xy]+\epsilon x_0y_0}{(1-\epsilon)\mathbb{E}_{H_0}[x^2]+\epsilon x_0^2}\right|-\frac{\lambda}{(1-\epsilon)\mathbb{E}_{H_0}[x^2]+\epsilon x_0^2}\right)_+\right]\bigg|_{\epsilon=0}=\\
&=\begin{cases}
0 &\hspace{-3cm}\text{ if }-\frac{\lambda}{\mathbb{E}_{H_0}[x^2]}\leq \beta_0<\frac{\lambda}{\mathbb{E}_{H_0}[x^2]}\\
\sign\left(\frac{\mathbb{E}_{H_0}[xy]}{\mathbb{E}_{H_0}[x^2]}\right)\left(\frac{(-\mathbb{E}_{H_0}[xy]+x_0y_0)\mathbb{E}_{H_0}[x^2]-\mathbb{E}_{H_0}[xy](-\mathbb{E}_{H_0}[x^2]+x_0^2)}{\left(\mathbb{E}_{H_0}[x^2]\right)^2}\right) + \frac{\lambda\left(-\mathbb{E}_{H_0}[x^2]
+x_0^2\right)}{\left(\mathbb{E}_{H_0}[x^2]\right)^2} &\text{ otherwise}
\end{cases}\\
&= \begin{cases}
0 &\text{ if }-\frac{\lambda}{\mathbb{E}_{H_0}[x^2]}\leq \beta_0 <\frac{\lambda}{\mathbb{E}_{H_0}[x^2]}\\
\sign(\beta_0)\left(\frac{x_0(y_0-\beta_0 x_0)}{\mathbb{E}_{H_0}[x^2]}\right)-\lambda\frac{\mathbb{E}_{H_0}[x^2]-x_0^2}{\left(\mathbb{E}_{H_0}[x^2]\right)^2} &\text{ otherwise}.
\end{cases}
\end{align*}
Thus, almost everywhere the influence function equals (\ref{eq:IF_lasso_uni}).
\end{proof}

\begin{proof}[Proof of Lemma \ref{lemma:lasso_shooting}]
Differentiating the lasso functional of the coordinate descent algorithm
\begin{align*}
\beta_j^{cd}(H)= \sign\left(\mathbb{E}_H\left[x_j (y-{\mathbf{x}^{(j)}}'\mathbf{\beta^\ast}^{(j)})\right]\right)\left(\left|\frac{\mathbb{E}_H\left[x_j (y-{\mathbf{x}^{(j)}}'\mathbf{\beta^\ast}^{(j)})\right]}{\mathbb{E}_H[x_j^2]}\right|-\frac{\lambda}{\mathbb{E}_H[x_j^2]}\right)_+
\end{align*}
for the contaminated model $(\mathbf{x},y)\sim H_\epsilon=(1-\epsilon)H_0+\epsilon\delta_{(\mathbf{x}_0,y_0)}$ yields
\begin{align}
IF(&(\mathbf{x}_0,y_0), \beta_j^{cd}, H_0, \mathbf{\beta^\ast})=\notag\\
&=\frac{\partial}{\partial\epsilon}\left[\sign\left(\mathbb{E}_{H_\epsilon}\left[x_j \left(y-{\mathbf{x}^{(j)}}'\mathbf{\beta^\ast}^{(j)}(\epsilon)\right)\right]\right)\right]\bigg|_{\epsilon=0}\left(\left|\frac{\mathbb{E}_{H_0}[x_j (y-{\mathbf{x}^{(j)}}'\mathbf{\beta^\ast}^{(j)}]}{\mathbb{E}_{H_0}[x_j^2]}\right|-\frac{\lambda}{\mathbb{E}_{H_0}[x_j^2]}\right)_{+}+\notag\\
&\quad+\sign\left(\mathbb{E}_{H_0}\left[x_j \left(y-{\mathbf{x}^{(j)}}'\mathbf{\beta^\ast}^{(j)}\right)\right]\right)\frac{\partial}{\partial\epsilon}\left[\left(\left|\frac{\mathbb{E}_{H_\epsilon}[x_j (y-{\mathbf{x}^{(j)}}'\mathbf{\beta^\ast}^{(j)}(\epsilon))]}{\mathbb{E}_{H_\epsilon}[x_j^2]}\right|-\frac{\lambda}{\mathbb{E}_{H_\epsilon}[x_j^2]}\right)_+\right]\bigg|_{\epsilon=0}.\label{eq:shoot_aux1}
\end{align}
Note that the fixed values $\mathbf{\beta^\ast}(\epsilon)$ depend on $\epsilon$, as they may depend on the data, e.g. if they are the values of a previous coordinate descent loop. $\mathbf{\beta^\ast}^{(j)}$ is used as an abbreviation for $\mathbf{\beta^\ast}^{(j)}(0)$ and $IF((\mathbf{x}_0,y_0),\mathbf{\beta^\ast}^{(j)},H_0)$ is shortened to $IF(\mathbf{\beta^\ast}^{(j)})$.

The derivative of the sign-function equals zero almost everywhere. For the derivation of the positive part function two different cases have to be considered
\begin{align}
&\hspace{-1.5cm}\frac{\partial}{\partial\epsilon}\left[\left(\left|\frac{(1-\epsilon)\mathbb{E}_{H_0}[x_j \left(y-{\mathbf{x}^{(j)}}'\mathbf{\beta^\ast}^{(j)}(\epsilon)\right)]+\epsilon (\mathbf{x}_0)_j \left(y_0-{\mathbf{x}_0^{(j)}}'\mathbf{\beta^\ast}^{(j)}(\epsilon)\right)}{(1-\epsilon)\mathbb{E}_{H_0}[x_
j^2]+\epsilon (\mathbf{x}_0)_j^2}\right|-\frac{\lambda}{(1-\epsilon)\mathbb{E}_{H_0}[x_j^2]+\epsilon (\mathbf{x}_0)_j^2}\right)_+\right]\bigg|_{\epsilon=0}=\notag\\
&=\begin{cases}
0\hspace{9.5cm}  \text{ if }\left|\frac{\mathbb{E}_{H_0}[x_j(y-{\mathbf{x}^{(j)}}'\mathbf{\beta^\ast}^{(j)})]}{\mathbb{E}_{H_0}[x_j^2]}\right| < \frac{\lambda}{\mathbb{E}_{H_0}[x_j^2]}\\
\sign\hspace{-0.5cm}\left(\frac{\mathbb{E}_{H_0}[x_j\left(y-{\mathbf{x}^{(j)}}'\mathbf{\beta^\ast}^{(j)}\right)]}{\mathbb{E}_{H_0}[x_j^2]}\right)\hspace{-.2cm}\bigg(\frac{(-\mathbb{E}_{H_0} [x_j\left(y - {\mathbf{x}^{(j)}}'\mathbf{\beta^\ast}^{(j)}\right)] + \left(-\mathbb{E}_{H_0}[x_j{\mathbf{x}^{(j)}}'IF(\mathbf{\beta^\ast}^{(j)})]\right) + (\mathbf{x}_0)_j\left(y_0-{\mathbf{x}_0^{(j)}}'\mathbf{\beta^\ast}^{(j)}\right))\mathbb{E}_{H_0}[x_j^2]}{\left(\mathbb{E}_{H_0}[x_j^2]\right)^2}\\
\hspace{4cm} +\,\frac{-\mathbb{E}_{H_0}[x_j\left(y-{\mathbf{x}^{(j)}}'\mathbf{\beta^\ast}^{(j)}\right)](-\mathbb{E}_{H_0}[x_j^2]+(\mathbf{x}_0)_j^2)}{\left(\mathbb{E}_{H_0}[x_j^2]\right)^2}\bigg) - \frac{-\lambda\left(-\mathbb{E}_{H_0}[x_j^2]
+(\mathbf{x}_0)_j^2\right)}{\left(\mathbb{E}_{H_0}[x_j^2]\right)^2} \text{   otherwise}
\end{cases}\notag\\
&=\begin{cases}
0 \hspace{9.5cm} \text{ if }\left|\mathbb{E}_{H_0}[x_j(y-{\mathbf{x}^{(j)}}'\mathbf{\beta^\ast}^{(j)})]\right| < \lambda\\
\sign(\mathbb{E}_{H_0}[x_j(y-{\mathbf{x}^{(j)}}'\mathbf{\beta^\ast}^{(j)})])\hspace{-0.2cm} \left(\frac{-\mathbb{E}_{H_0}[x_j{\mathbf{x}^{(j)}}'IF(\mathbf{\beta^\ast}^{(j)})] + (\mathbf{x}_0)_j\left(y_0-{\mathbf{x}_0^{(j)}}'\mathbf{\beta^\ast}^{(j)}\right)}{\mathbb{E}_{H_0}[x_j^2]}- \frac{\frac{\mathbb{E}_{H_0}[x_j\left(y-{\mathbf{x}^{(j)}}'\mathbf{\beta^\ast}^{(j)}\right)]}{\mathbb{E}_{H_0}[x_j^2]}(\mathbf{x}_0)_j^2}{\mathbb{E}_{H_0}[x_j^2]}\right)\notag\\
\hspace{11cm}-\lambda\frac{\mathbb{E}_{H_0}[x_j^2]-(\mathbf{x}_0)_j^2}{\left(\mathbb{E}_{H_0}[x_j^2]\right)^2} \text{   otherwise}.
\end{cases}\label{eq:shoot_aux2}\\
\end{align}
Using the result of Equation (\ref{eq:shoot_aux2}) in (\ref{eq:shoot_aux1}) and denoting $\tilde{y}^{(j)}:= y - {\mathbf{x}^{(j)}}'{\mathbf{\beta}^\ast}^{(j)}$ yields influence function (\ref{eq:IF_shoot}).
\end{proof}

\begin{proof}[Proof of Proposition \ref{prop:lasso_multi}]
W.l.o.g. $\mathbf{\beta}_{LASSO}=(\mathbf{\tilde{\beta}},0,\ldots,0)'$ with $\mathbf{\tilde{\beta}}\in\mathbb{R}^k$ and $\tilde{\beta}_j\neq0\,\forall j=1,\ldots,k$. At first, we only consider variables $j=1,\ldots,k$. For them, the first-order condition (FOC) for finding the minimum of (\ref{eq:lassoMulti}) yields
\begin{align*}
\left(-2\mathbb{E}_H[\mathbf{x}(y-\mathbf{x}'\mathbf{\beta}_{LASSO}(H))]+2\lambda\sign(\mathbf{\beta}_{LASSO}(H))\right)_j=0\qquad j=1,\ldots,k
\end{align*}
Let $(\mathbf{x},y)\sim H_0$ denote the model distribution and $H_\epsilon$ the contaminated distribution. Then the FOC at the contaminated model is
\begin{align*}
-(1-\epsilon)\mathbb{E}_{H_0}[x_j(y-\mathbf{x}'\mathbf{\beta}_{LASSO}(H_\epsilon))]-\epsilon(\mathbf{x}_0)_j(y-\mathbf{x}_0'\mathbf{\beta}_{LASSO}(H_\epsilon))+\lambda\sign((\beta_{LASSO}(H_\epsilon))_j)=0.
\end{align*}
After differentiating with respect to $\epsilon$, we get
\begin{align*}
&\mathbb{E}_{H_0}\left[x_j(y-\mathbf{x}'\mathbf{\beta}_{LASSO}(H_\epsilon))\right] + (1-\epsilon)\left(\mathbb{E}_{H_0}[x_j\mathbf{x}']\frac{\partial\mathbf{\beta}_{LASSO}(H_\epsilon)}{\partial\epsilon}\right)-\\
&\qquad-(\mathbf{x}_0)_j\left(y-\mathbf{x}_0'\mathbf{\beta}_{LASSO}(H_\epsilon)\right)+\epsilon(\mathbf{x}_0)_j\left(\mathbf{x}_0'\frac{\partial\mathbf{\beta}_{LASSO}(H_\epsilon)}{\partial\epsilon}\right)=0.
\end{align*}
Taking the limit as $\epsilon$ approaches 0 gives an implicit definition of the influence function for $j=1,\ldots,k$
\begin{align}
\label{eq:lassomulti1}
\mathbb{E}_{H_0}[x_j\mathbf{x}']\cdot IF((\mathbf{x}_0,y_0),&\mathbf{\beta}_{LASSO},H_0) =\\
&= (\mathbf{x}_0)_j(y-\mathbf{x}_0'\mathbf{\beta}_{LASSO}(H_0)) - \mathbb{E}_{H_0}[x_j(y-\mathbf{x}'\mathbf{\beta}_{LASSO}(H_0))].\notag
\end{align}
For variables $j=k+1,\ldots,p$ with $(\mathbf{\beta}_{LASSO})_j=0$, we need to use subgradients \citep[][]{Bertsekas} to get the FOC
\begin{align*}
\mathbf{0}\,\in\,-\mathbb{E}_H[\mathbf{x}(y-\mathbf{x}'\mathbf{\beta}_{LASSO}(H))]+\lambda\cdot\partial\left(\|\mathbf{\beta}_{LASSO}(H)\|_1\right).
\end{align*}
Observing each variable individually yields
\begin{align}
|\mathbb{E}_H\left[x_j(y-\mathbf{x}'\mathbf{\beta}_{LASSO}(H))\right]|\leq\lambda.
\label{eq:FOCbeta0}
\end{align}
The coordinate descent algorithm converges for any starting value $\mathbf{\beta^\ast}$ to $\mathbf{\beta}_{LASSO}$ \citep[][]{Friedman, Tseng}, i.e. after enough updates $\mathbf{\beta^\ast}\approx \mathbf{\beta}_{LASSO}$. Thus, for $(\mathbf{\beta}_{LASSO}(H_0))_j=0$ and $(\mathbf{x},y)\sim H_0$, Equation (\ref{eq:FOCbeta0}) yields
\begin{align*}
\left|\mathbb{E}_{H_0}[x_j(y-{\mathbf{x}^{(j)}}'\mathbf{\beta^\ast}^{(j)})]\right| \leq \lambda.
\end{align*}
Lemma \ref{lemma:lasso_shooting} tells us then that
\begin{align*}
IF((\mathbf{x}_0,y_0), (\mathbf{\beta}_{LASSO})_j, H_0) = 0 \qquad \forall \, j=k+1,\ldots,p.
\end{align*}
With this we can rewrite Equation (\ref{eq:lassomulti1}) as
\begin{align*}
\mathbb{E}_{H_0}[\mathbf{x}_{1:k}\mathbf{x}'_{1:k}]\cdot IF((\mathbf{x}_0,y_0),&(\mathbf{\beta}_{LASSO})_{1:k},H_0) = \\
&= (\mathbf{x}_0)_{1:k}(y-\mathbf{x}_0'\mathbf{\beta}_{LASSO}(H_0)) - \mathbb{E}_{H_0}[x_{1:k}(y-\mathbf{x}'\mathbf{\beta}_{LASSO}(H_0))].
\end{align*}
Multiplying with $\mathbb{E}_{H_0}[\mathbf{x}_{1:k}\mathbf{x}'_{1:k}]^{-1}$ from the left side, we get the influence function of the lasso functional (\ref{eq:IF_lasso_multi}).
\end{proof}

\begin{proof}[Proof of Lemma \ref{lemma:approx_tan}]
We apply Proposition \ref{prop:gen} with a quadratic loss function and use the second derivative of the penalty function $J_K$
\begin{align*}
J''_K((\mathbf{\beta}_K)_j)=\begin{cases}
J''_K((\mathbf{\beta}_K)_j) =: a_j & j=1,\ldots,k\\
2K & j=k+1,\ldots,p.
\end{cases}
\end{align*}
W.l.o.g. we take $\sigma = 1$. This gives the influence function of $\mathbf{\beta}_K(H_0)$ 
\begin{align*}
IF((\mathbf{x}_0,y_0),\mathbf{\beta}_K, H_0) = \, & (\mathbb{E}_{H_0}[\mathbf{xx}'] + \lambda \diag(J''_K((\mathbf{\beta}_K)_1),\ldots,J''_K((\mathbf{\beta}_K)_k), 2K,\ldots,2K))^{-1} \cdot \\
& \cdot ((y_0-\mathbf{x}_0'\mathbf{\beta}_K(H_0))\mathbf{x}_0 - \mathbb{E}_{H_0}[(y-\mathbf{x}'\mathbf{\beta}_K(H_0))\mathbf{x}])
\end{align*}
The covariance matrix $\mathbb{E}_{H_0}[\mathbf{xx}']$ can be denoted as a block matrix
\begin{align*}
\mathbb{E}_{H_0}[\mathbf{xx}'] = \begin{pmatrix}
E_{11} & E_{12}\\
E_{21} & E_{22}
\end{pmatrix}.
\end{align*}
The inverse matrix needed in the influence function is then
\begin{align}
(\mathbb{E}_{H_0}[\mathbf{xx}'] + \lambda \diag(&J''_K((\mathbf{\beta}_K)_1),\ldots,J''_K((\mathbf{\beta}_K)_k), 2K,\ldots,2K))^{-1} = \notag\\
&=\begin{pmatrix}
E_{11}+\lambda \diag(J''_K((\mathbf{\beta}_K)_{1:k})) & E_{12}\\
E_{21} & E_{22} + 2\lambda K I_{p-k}
\end{pmatrix} ^{-1}.
\label{eq:inv}
\end{align}
The inverse of the block matrix can be computed as
\begin{align*}
(\mathbb{E}_{H_0}[\mathbf{xx}'] + \lambda \diag(0,\ldots,0, 2K,\ldots,2K))^{-1} = \begin{pmatrix}
A^{-1} + A E_{12}C^{-1}E_{21}A^{-1} & -A^{-1}E_{12}C^{-1}\\
-C^{-1}E_{21}A^{-1} & C^{-1}
\end{pmatrix}
\end{align*}
with $C = E_{22}+2\lambda K I_{p-k} - E_{21}A^{-1}E_{12}$ and $A=E_{11} + \lambda\diag(J''_K((\mathbf{\beta}_K)_{1:k}))$ \citep[see][p11]{Magnus}.

We denote the eigenvalues of matrix $D=E_{22} - E_{21}E_{11}^{-1}E_{12}$ by $\nu_1,\ldots,\nu_{p-k}$. Then the eigenvalues of the symmetric positive definite matrix $C$ are $\nu_1+2\lambda K,\ldots,\nu_{p-k}+2\lambda K$. If $K$ approaches infinity, these eigenvalues also tend to infinity. Hence, all eigenvalues of $C^{-1}$ converge to zero. Thus, $C^{-1}$ becomes the zero matrix and therefore the inverse matrix in (\ref{eq:inv}) converges to
\begin{align*}
\lim_{K\rightarrow\infty} (\mathbb{E}_{H_0}[\mathbf{xx}'] + \lambda \diag(0,\ldots,0, 2K,\ldots,2K)) ^{-1} = \begin{pmatrix}
E_{11}^{-1} & \mathbf{0}\\
\mathbf{0} & \mathbf{0}
\end{pmatrix}.
\end{align*}
This gives the influence function of the lasso functional (\ref{eq:IF_lasso_multi}) as the limit of $IF((\mathbf{x}_0, y_0),\mathbf{\beta}_K,H_0)$ for $K\rightarrow\infty$.
\end{proof}

\begin{proof}[Proof of Lemma \ref{lemma:IFspLTS}]
As the sparse LTS functional is continuous, the influence function of the sparse LTS functional equals $0$ if $\beta_{spLTS}(H_0)=0$. Thus, assume from now on $\beta_{spLTS}(H_0)\neq0$.

The first-order condition at the contaminated model $H_\epsilon = (1-\epsilon)H_0+\epsilon\delta_{(x_0,y_0)}$ yields
\begin{align}
0=\frac{\partial}{\partial\beta}\left(\int_{-q_{\epsilon,\beta}}^{q_{\epsilon,\beta}}u^2dH_\epsilon^\beta(u)\right)+\alpha\lambda\sign(\beta)=:\Psi(\epsilon,\beta).
\label{eq:aux}
\end{align}
Note that here the quantile $q_{\epsilon,\beta}$ as well as the joint model distribution $H_\epsilon^\beta$ of $x$ and $y$  depend on $\beta$. We denote the solution of (\ref{eq:aux}) by $\beta_\epsilon:=\beta_{spLTS}(H_\epsilon)$ for $\epsilon\neq0$ and $\beta_{spLTS}(H_0)$ otherwise.

As (\ref{eq:aux}) is true for all $\epsilon\in\mathbb{R}_+$, the chain rule gives
\begin{gather}
0 = \frac{\partial}{\partial\epsilon}[\Psi(\epsilon,\beta_\epsilon)]|_{\epsilon = 0} = \Psi_1(0,\beta_{spLTS}(H_0)) + \Psi_2(0,\beta_{spLTS}(H_0)) IF(\beta_{spLTS})\notag\\
\leadsto IF(\beta_{spLTS}) = -[\Psi_2(0,\beta_{spLTS}(H_0))]^{-1}\Psi_1(0,\beta_{spLTS}(H_0))
\label{eq:IFpsi}
\end{gather}
where $\Psi_1(a,b) = \frac{\partial}{\partial \epsilon} \Psi(\epsilon,b)|_{\epsilon=a}$ and $\Psi_2(a,b) = \frac{\partial}{\partial \beta} \Psi(a,\beta)|_{\beta=b}$.

Before computing $\Psi_1(0,\beta_{spLTS}(H_0))$ and $\Psi_2(0,\beta_{spLTS}(H_0))$, we can simplify $\Psi(\epsilon,\beta)$ by using $H_0^\beta = \mathcal{N}(0,\sigma^2(\beta))$ with $\sigma^2(\beta)=\sigma^2 +(\beta_{spLTS}(H_0) - \beta)^2 \Sigma$, as $x\sim\mathcal{N}(0,\Sigma)$ and $e\sim\mathcal{N}(0,\sigma^2)$
\begin{align*}
\Psi(\epsilon,\beta) &= \frac{\partial}{\partial\beta}\left((1-\epsilon)\int_{-q_{\epsilon,\beta}}^{q_{\epsilon,\beta}} u^2 dH_0^\beta(u) + \epsilon I_{[|y_0-x_0\beta|\leq q_{\epsilon,\beta}]} (y_0-x_0\beta)^2\right)+\alpha\lambda\sign(\beta)\\
&=(1-\epsilon)\frac{\partial}{\partial\beta}\left(\int_{-q_{\epsilon,\beta}}^{q_{\epsilon,\beta}} \frac{u^2}{\sigma(\beta)}\phi(\frac{u}{\sigma(\beta)})du\right) - 2\epsilon x_0(y_0-x_0\beta)I_{[|y_0-x_0\beta|\leq q_{\epsilon,\beta}]}+\alpha\lambda\sign(\beta)\\
\end{align*}
and the Leibniz integral rule
\begin{align*}
\frac{\partial}{\partial\beta}\left(\int_{-q_{\epsilon,\beta}}^{q_{\epsilon,\beta}} \frac{u^2}{\sigma(\beta)}\phi(\frac{u}{\sigma(\beta)})du\right) = \int_{-q_{\epsilon,\beta}}^{q_{\epsilon,\beta}} u^2\phi(\frac{u}{\sigma(\beta)})(1-\frac{u^2}{\sigma^2(\beta)}&)du\frac{(\beta_0-\beta)\Sigma}{\sigma^3(\beta)}+\\
&+2\frac{q_{\epsilon,\beta}^2}{\sigma(\beta)}\phi(\frac{q_{\epsilon,\beta}}{\sigma(\beta)})\frac{\partial}{\partial\beta}[q_{\epsilon,\beta}].
\end{align*}

To obtain the derivative $\Psi_1(0,\beta_{spLTS}(H_0))$, we can again use the Leibniz integral rule
\begin{align*}
\Psi_1(&0,\beta_{spLTS}(H_0)) =\\
& -\bigg(\int_{-q_{0,\beta_{spLTS}(H_0)}}^{q_{0,\beta_{spLTS}(H_0)}} u^2 \phi(\frac{u}{\sigma(\beta_{spLTS}(H_0))})(1-\frac{u^2}{\sigma^2(\beta_{spLTS}(H_0))})du\frac{(\beta_0-\beta_{spLTS}(H_0))\Sigma}{\sigma^3(\beta_{spLTS}(H_0))} +\\
&\hspace{4cm}+2\frac{q_{0,\beta_{spLTS}(H_0)}^2}{\sigma(\beta_{spLTS}(H_0))}\phi(\frac{q_{0,\beta_{spLTS}(H_0)}}{\sigma(\beta_{spLTS}(H_0))})\frac{\partial}{\partial\beta} [q_{0,\beta}]|_{\beta=\beta_{spLTS}(H_0)}\bigg) +\\
&+\frac{\partial}{\partial\epsilon}\bigg[\int_{-q_{\epsilon,\beta_{spLTS}(H_0)}}^{q_{\epsilon,\beta_{spLTS}(H_0)}} u^2 \phi(\frac{u}{\sigma(\beta_{spLTS}(H_0))})(1-\frac{u^2}{\sigma^2(\beta_{spLTS}(H_0))})du\bigg]\bigg|_{\epsilon = 0}\frac{(\beta_0-\beta_{spLTS}(H_0))\Sigma}{\sigma^3(\beta_{spLTS}(H_0))} +\\
&+4\frac{q_{0,\beta_{spLTS}(H_0)}}{\sigma(\beta_{spLTS}(H_0))}\frac{\partial}{\partial\epsilon}[q_{\epsilon,\beta_{spLTS}(H_0)}]|_{\epsilon = 0}\,\phi(\frac{q_{0,\beta_{spLTS}(H_0)}}{\sigma(\beta_{spLTS}(H_0))})\frac{\partial}{\partial\beta}[q_{0,\beta}]|_{\beta=\beta_{spLTS}(H_0)}+\\
&+2\frac{q_{0,\beta_{spLTS}(H_0)}^2}{\sigma(\beta_{spLTS}(H_0))}\phi'(\frac{q_{0,\beta_{spLTS}(H_0)}}{\sigma(\beta_{spLTS}(H_0))})\frac{\partial}{\partial\epsilon}[q_{\epsilon,\beta_{spLTS}(H_0)}]|_{\epsilon=0} \frac{1}{\sigma(\beta_{spLTS}(H_0))}\frac{\partial}{\partial\beta}[q_{0,\beta}]|_{\beta=\beta_{spLTS}(H_0)}+\\
&+2\frac{q_{0,\beta_{spLTS}(H_0)}^2}{\sigma(\beta_{spLTS}(H_0))}\phi(\frac{q_{0,\beta_{spLTS}(H_0)}}{\sigma(\beta_{spLTS}(H_0))})\frac{\partial}{\partial\epsilon}[\frac{\partial}{\partial\beta}[q_{\epsilon,\beta}]|_{\beta=\beta_{spLTS}(H_0)}]|_{\epsilon=0} -\\
&-2x_0(y_0-x_0\beta_{spLTS}(H_0))I_{[|y_0-x_0\beta_{spLTS}(H_0)|\leq q_{0,\beta_{spLTS}(H_0)}]}.
\end{align*}

To compute the derivatives of the quantiles, we denote the distribution of $|y-\mathbf{x}'\mathbf{\beta}|$ by $\bar{H}_\epsilon^{\mathbf{\beta}}$ when $(\mathbf{x},y)\sim H_\epsilon$. Using the equations $\bar{H}_\epsilon^\beta(q_\epsilon,\beta)=\alpha$ and $\bar{H_0}^\beta(q_0,\beta)=\alpha$ and differentiating w.r.t. the required variables yields
\begin{gather*}
\frac{\partial}{\partial\epsilon}[q_{\epsilon,\beta_{spLTS}(H_0)}]|_{\epsilon=0}=\frac{\alpha - I_{[|y_0-x_0\beta_{spLTS}(H_0)|\leq q_{0,\beta_{spLTS}(H_0)}]}}{2\phi(q_\alpha)\frac{1}{\sigma(\beta_{spLTS}(H_0))}}\\
\frac{\partial}{\partial\beta}[q_{0,\beta}]|_{\beta=\beta_{spLTS}(H_0)}=-\frac{q_{0,\beta_{spLTS}(H_0)}(\beta_0-\beta_{spLTS}(H_0))\Sigma}{\sigma^2(\beta_{spLTS}(H_0))}\\
\frac{\partial}{\partial\epsilon}[\frac{\partial}{\partial\beta}[q_{\epsilon,\beta}]|_{\beta=\beta_{spLTS}(H_0)}]|_{\epsilon=0}=\frac{I_{[|r_0|\leq q_\alpha]}-\alpha}{2\phi(q_\alpha)}\cdot\frac{(\beta_0-\beta_{spLTS}(H_0))\Sigma}{\sigma(\beta_{spLTS}(H_0))}
\end{gather*}
with $r_0:=\frac{y_0-x_0\beta_{spLTS}(H_0)}{\sigma(\beta_{spLTS}(H_0))}$.

Thus,
\begin{align}
\Psi_1(0,\beta_{spLTS}(H_0)) =&(-4q_\alpha\phi(q_\alpha)+2\alpha+2q_\alpha^2(I_{[|r_0|\leq q_\alpha]}-\alpha))(\beta_0-\beta_{spLTS}(H_0))\Sigma\\
&-2x_0(y_0-x_0\beta_{spLTS}(H_0))I_{[|r_0|\leq q_\alpha]}.
\label{eq:psi1}
\end{align}

With similar ideas as in the derivation of $\Psi_1(0,\beta_{spLTS}(H_0))$, we get
\begin{align}
\Psi_2(0,\beta_{spLTS}(H_0)) =(-4q_\alpha\phi(q_\alpha)+4\Phi(q_\alpha)-2)\Sigma.
\label{eq:psi2}
\end{align}

Using (\ref{eq:psi1}) and (\ref{eq:psi2}) in (\ref{eq:IFpsi}), we get the influence function (\ref{eq:IFspLTS}) of the sparse LTS functional for simple regression.
\end{proof}

%% Bibliography
\bibliographystyle{plainnat}
\bibliography{IF_Revision2}

\end{document}